\newtheorem{theorem}{Theorem}[section]
\newtheorem{corollary}[theorem]{Corollary}
\newtheorem{proposition}[theorem]{Proposition}
\theoremstyle{definition}
\newtheorem{Definition}[theorem]{Definition}
\newtheorem{Remark}[theorem]{Remark}
\newtheorem{example}[theorem]{Example}
\numberwithin{equation}{section}
\begin{document}

%
%
%
%
%
%
%
%
%

\title[Generalized Drazin-Riesz invertible operators and $C_{0}$-semigroups]
{On the closed generalized Drazin-Riesz invertible operators and $C_{0}$-semigroups}

\author{Othman Abad and Hassane Zguitti}

\address{Department of Mathematics, Dhar El Mahraz Faculty of Science, Sidi Mohamed Ben Abdellah University, BO 1796 Fes-Atlas, 30003 Fez Morocco.}

\email{hassane.zguitti@usmba.ac.ma}
\email{othman.abad@usmba.ac.ma}

\subjclass{Primary 47A10, 47A53, 47D06 }
\keywords{Closed Drazin-Riesz inverse, bounded Riesz operator, $C_{0}$-semigroup}


\maketitle

\begin{abstract}
This paper is a continuation of our paper [Med. J. Math 19, Article number: 31 (2022)] in which we extended the notion of  generalized Drazin-Riesz invertible operators to closed operators. We  establish here, results relating the notion of closed generalized Drazin-Riesz invertibility with the theory of $C_{0}$-semigroups. Firstly, we generalize results obtained in the bounded case \cite{AbZg1} to the context of closed operators. Secondly, we  investigate when an infinitesimal generator $A$ of a given $C_{0}$-semigroup is  closed generalized Drazin-Riesz invertible. An application to $C_{0}$-groups and abstract second order differential equations is proposed, and an example of a  $C_{0}$-group with closed generalized Drazin-Riesz invertible  infinitesimal generator is given. 
\end{abstract}



\section{Introduction}
Right through this paper, $X$ is a complex Banach space of infinite dimension, $\mathcal{C}(X)$ and $\mathcal{L}(X)$  are the classes of all closed linear densely defined operators and bounded  linear operators on $X$, respectively. Let $A \in \mathcal{C}(X)$, the null space, the domain, and the range of $A$ are  respectively denoted by $\mathcal{N}(A)$,  $\mathcal{D}(A)$, and $\mathcal{R}(A)$. For $A\in\mathcal{C}(X)$ and $B\in\mathcal{L}(X)$,  $A$ and $B$ {\it are commuting} in $\mathcal{D}(A)$ if $\mathcal{R}(B)\subset\mathcal{D}(A)$ and $BAx=ABx$ for all $x \in \mathcal{D}(A)$; and then we write $AB=BA$. Let $\mathcal{D}^{\infty}(A):= \underset{n \in \mathbb{N}}{\bigcap} \mathcal{D}(A^{n}),$ where $$\mathcal{D}(A^{n})=\{x \in X \ : \ x,Ax,...,A^{n-1}x \in \mathcal{D}(A) \}, \mbox{ for all }  n \in \mathbb{N}.$$

For $A\in \mathcal{C}(X)$, let $\rho(A)=\{ \lambda\in\mathbb{C} \ : \ \lambda -A \mbox{ is bijective }\}$  and $\sigma(A)=\mathbb{C}\setminus \rho(A)$ be respectively, the {\it resolvent set} and the {\it spectrum} of $A$. By the graph theorem, if $\lambda\in\rho(A)$, then $(\lambda -A)^{-1}\in\mathcal{L}(X)$.
 Let $\sigma_{\infty}(A):=\sigma(A)\cup \{\infty\}$ be the compactified spectrum in $\mathbb{C}_{\infty}= \mathbb{C}\cup \{\infty\}$.
\smallskip

Here and elsewhere, for a subset $K$ of $\mathbb{C}$, we mean respectively by $iso\, K$, and $acc\, K$, the set of isolated points of $K$ and the set of accumulation points of $K$. $D(\lambda,r)$ denotes the open disc centered at $\lambda\in\mathbb{C}$ and with radius $r>0$ and its  corresponding closed disc is denoted by $\overline{D}(\lambda,r)$.
\smallskip

 An operator $A \in \mathcal{C}(X)$ is said to be of {\it finite ascent} if    $\mathcal{N}(A^n)=\mathcal{N}(A^{n+1})$ for some $n\in\mathbb{N}_{0}=\mathbb{N} \cup \{0\}$. The ascent of $A$ is denoted by $asc(A)$, and is defined to be the smallest $n\in\mathbb{N}_0$ such that $\mathcal{N}(A^n)=\mathcal{N}(A^{n+1})$. If no such  $n$ exists we set $asc(A)=\infty$. Similarly, $A$ is said to be of {\it finite descent} if there exists $n \in \mathbb{N}_{0}$ such that $\mathcal{R}(A^n)=\mathcal{R}(A^{n+1})$. The {\it descent} of $A$,  $dsc(A)$,  is the smallest $n\in\mathbb{N}_0$ provided that  $\mathcal{R}(A^n)=\mathcal{R}(A^{n+1})$. We set $dsc(A)=\infty$ if no such $n$ exists  \cite{Fakhfakh}.
 \smallskip

 $A \in \mathcal{C}(X)$ is {\it upper semi-Fredholm} (resp. {\it lower semi-Fredholm}) provided that $\mathcal{R}(A)$ is closed and $\dim\,\mathcal{N}(A)<\infty$ (resp. $\mbox{codim}\,\mathcal{R}(A)<\infty$). $A$ is {\it Fredholm} if $A$ is both  upper  and lower semi-Fredholm \cite{Fakhfakh}.  $A$ is {\it Browder} if it is Fredholm of finite ascent and descent \cite{Fakhfakh}. The {\it essential spectrum} $\sigma_e(A)$ and the {\it Browder spectrum} $\sigma_b(A)$ are respectively defined by $\sigma_e(A)=\{\lambda\in\mathbb{C}\,:\,A-\lambda\mbox{ is not Fredholm}\}$ and $\sigma_b(A)=\{\lambda\in\mathbb{C}\,:\,A-\lambda\mbox{ is not Browder}\}$. We respectively define the essential and Browder resolvents by $\rho_{e}(A)=\mathbb{C} \setminus \sigma_{e}(A)$ and $\rho_{b}(A)=\mathbb{C} \setminus \sigma_{b}(A)$. The set of all {\it Riesz points} of $A \in \mathcal{C}(X)$ is defined by $p_{00}(A):= \sigma(A) \setminus \sigma_{b}(A)= \sigma(A) \cap \rho_{b}(A)$ \cite{Fakhfakh}.
 \smallskip
 
\noindent If $\mathcal{R}(A)$ is closed and $\mathcal{N}(A)\subset\mathcal{R}(A^n)$ for all $n\in\mathbb{N}$, then $A \in \mathcal{C}(X)$ is called  a {\it Kato} operator \cite[Definition 2.1]{Fakhfakh}. 
\smallskip

An operator $T \in \mathcal{L}(X)$ is said to be quasinilpotent if $\sigma(T)=\{0\}$. $T \in  \mathcal{L}(X)$ is said to be Riesz if $\sigma_{e}(T) = \{0\}$ \cite{Aie}. 
\smallskip

\noindent We say that $A \in \mathcal{C}(X)$ has a reduction $(M,N)$, and we write $(M,N) \in Red(A)$, if $M$ and $N$ are closed subspaces of $X$ provided that $A(M\cap \mathcal{D}(A) ) \subset M$, $N \subset \mathcal{D}(A)$, $A(N) \subset N$, and $X=M \oplus N$. In this case, we write $A=A_{M} \oplus A_{N}$ 
with $\mathcal{D}(A_{M})=M \cap \mathcal{D}(A)$, $A_{M}=A$ in $\mathcal{D}(A_{M})$, and $A_{N}= A$ in $N$ \cite{AbZgMed} (see \cite[Definition V.5.1]{Taylor2} for the general case).
\smallskip

\noindent  $A\in \mathcal{C}(X)$ is said to admit a {\it generalized Kato-Riesz decomposition} (GKRD), if there is a pair $(M,N) \in Red(A)$ provided that $A_{M}$ is Kato and $A_{N}$ is bounded Riesz \cite{AbZgMed} (for bounded operators, see \cite{Ziv}), in this case we write $A \in GKRD(M,N)$.

\smallskip

Following \cite{NashZhao}, $A\in\mathcal{C}(X)$ is  said to be {\it Drazin invertible} if there is $B\in\mathcal{L}(X)$ provided that  $\mathcal{R}(B)$ and  $\mathcal{R}(I-AB)$ are contained in $\mathcal{D}(A)$ and
\begin{equation}\label{Bd} \ BAB=B,\ AB=BA, \mbox{ and }A(I-AB) \mbox{ is nilpotent}.
\end{equation}
Such a $B$ is unique if it exists. Moreover, the Drazin invertibility of $A$ is  equivalent to say that $0$ is a pole of finite order for the resolvent of $A$ \cite{NashZhao}. This notion was first introduced in 1958 by Drazin \cite{Dra} in semigroups and associative rings. The concept of Drazin invertibility was widely studied in matrix theory \cite{Ben,Camp} and in operator theory \cite{Car,King,Lay}.
\smallskip

In 2002,  Koliha and Tran \cite{Kol} introduced one of the most central generalizations of the Drazin inverse. $A\in\mathcal{C}(X)$ is {\it generalized Drazin invertible} if there is a  $B\in\mathcal{L}(X)$ provided that $\mathcal{R}(B)$ and  $\mathcal{R}(I-AB)$ are contained in $\mathcal{D}(A)$, and
\begin{equation}
\label{Cgd} \ BAB=B ,\ AB=BA \mbox{ and } \sigma(A(I-AB))=\{0\}.
\end{equation} 
If such a $B$ exists then it is unique and it is called the {\it generalized Drazin inverse} of $A$. Furthermore, $A \in \mathcal{C}(X)$ is generalized Drazin invertible if and only if $0 \notin acc\, \sigma(A)$ \cite{Kol}.
\smallskip

In \cite{AbZgMed} we extended to closed operators, the notion of bounded generalized Drazin-Riesz invertible operators introduced by  S. C. Živkovi\'c-Zlatanovi\'c and M.D. Cvetkovi\'c \cite{Ziv}:  $A\in \mathcal{C}(X)$ is {\it generalized Drazin-Riesz  invertible} provided that there is a $B \in \mathcal{L}(X)$ satisfying $\mathcal{R}(B)$ and  $\mathcal{R}(I-AB)$ are contained in $\mathcal{D}(A)$, and
\begin{equation}\label{DR} \ BAB=B,\ AB=BA \mbox{ and } A(I-AB) \mbox{ is bounded Riesz}.\end{equation}
We say that $B$ is a {\it generalized Drazin-Riesz inverse} of $A$.
\smallskip

The aim of this paper is to establish results relating the notion of closed generalized Drazin-Riesz invertibility with the theory of $C_{0}$-semigroups. To reach this aim, in Section \ref{section2}, we generalize results obtained in the bounded case \cite{AbZg1} to the context of closed operators. Especially, we show that $A \in \mathcal{C}(X)$ under other conditions is generalized Drazin-Riesz invertible if and only if $0 \notin acc\, \sigma_{b}(A)$. In Section \ref{section3}, a generalization of \cite[Theorem 7.1]{Gonzalez} is given by investigating when an infinitesimal generator $A$ of a given $C_{0}$-semigroup is  closed generalized Drazin-Riesz invertible. Also we give an integral formula to express a generalized Drazin-Riesz inverse of $A$. Finally, an application to $C_{0}$-groups and abstract second order differential equations is investigated and an example of a  $C_{0}$-group with unbounded infinitesimal generator which is closed generalized Drazin-Riesz invertible is given. 
\section{Closed generalized Drazin-Riesz inverse related to a spectral set containing 0}
\label{section2}
In the sequel, we denote by  $\widehat{{\mathcal{C}}}(X)$ the set of all $A \in \mathcal{C}(X)$  provided that $\rho_{e}(A) \neq \emptyset$ and for all sufficiently large $j \in \mathbb{N}$, $\mathcal{D}(A^{j+1})=\mathcal{D}(A^{j})$.
\smallskip

 The  next results will be widely applied in this paper.
\begin{theorem} [Theorem 3.6 \cite{AbZgMed}] \label{theo 2.3}
Let $A \in \widehat{\mathcal{C}}(X)$. Then the following conditions are equivalent:
\begin{enumerate}
\item[(i)] $A$ is closed generalized Drazin-Riesz invertible;
\item[(ii)] There exists $(M,N) \in Red(A)$ such that $A_{M}$ is closed invertible and $A_{N}$ is bounded Riesz;
\item[(iii)] There is $P \in \mathcal{L}(X)$ a projection which commute with $A$ such that $AP$ is bounded Riesz and $A+P$ is closed Browder;
\item[(iv)] There is $(M,N) \in Red(A)$ provided that  $A_{M}$ is closed Browder and $A_{N}$ is bounded Riesz;
\item[(v)] $0 \notin acc\, \sigma_{b}(A) $ and  $A \in GKRD(M,N)$;
 \end{enumerate}
\end{theorem}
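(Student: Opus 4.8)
The plan is to establish the cycle $(i)\Rightarrow(ii)\Rightarrow(iv)\Rightarrow(v)\Rightarrow(i)$ together with the separate equivalence $(iii)\Leftrightarrow(iv)$, using throughout the two standing features encoded in $\widehat{\mathcal{C}}(X)$: the nonemptiness of $\rho_e(A)$ (so that the Fredholm/Browder spectral theory and the Riesz functional calculus are available) and the stabilization $\mathcal{D}(A^{j+1})=\mathcal{D}(A^{j})$ for large $j$ (so that the iterates of $A$ and the bounded operators built from them behave as in the bounded case). For $(i)\Rightarrow(ii)$, given a generalized Drazin-Riesz inverse $B$ I would set $P:=I-AB$; from $BAB=B$ and $AB=BA$ one checks that $P$ is a bounded idempotent commuting with $A$, so the pair $(M,N):=(\mathcal{N}(P),\mathcal{R}(P))=(\mathcal{R}(AB),\mathcal{N}(AB))$ lies in $Red(A)$. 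On $N$ one has $A_N=(A(I-AB))|_N$, which is bounded Riesz by (\ref{DR}); on $M$, $B|_M$ is a two-sided inverse of $A_M$, and closedness of $A_M$ forces $A_M$ closed invertible. Conversely $(ii)\Rightarrow(i)$ is obtained by taking $B:=(A_M)^{-1}\oplus 0$ with respect to $X=M\oplus N$ and verifying (\ref{DR}) componentwise.

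For $(ii)\Rightarrow(iv)$ it suffices to recall that a closed invertible operator is closed Browder. For $(iii)\Leftrightarrow(iv)$: from (iv), let $P$ be the bounded projection onto $N$ along $M$; then $AP=0\oplus A_N$ is bounded Riesz, while $A+P=A_M\oplus(I_N+A_N)$ is closed Browder because $A_M$ is Browder and $I_N+A_N$, being a Riesz perturbation of the identity with $-1\notin acc\,\sigma(A_N)$, is Fredholm of index zero with finite ascent and descent. Conversely, from (iii), put $M:=\mathcal{N}(P)$, $N:=\mathcal{R}(P)$; since $P$ commutes with $A$ this pair reduces $A$, on $N$ one has $A_N=(AP)|_N$ bounded Riesz, and on $M$ one has $A_M=(A+P)|_M$ closed Browder. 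For $(iv)\Rightarrow(v)$, decompose the closed Browder operator $A_M$ (with common ascent/descent $p$) as $A_{M_1}\oplus A_{M_2}$ along $M=\mathcal{N}(A_M^{p})\oplus\mathcal{R}(A_M^{p})$, where $A_{M_1}$ is a finite-rank nilpotent and $A_{M_2}$ is closed invertible; then $A=A_{M_2}\oplus(A_{M_1}\oplus A_N)$ is a GKRD, since an invertible operator is Kato and the direct sum of a finite-rank nilpotent with a bounded Riesz operator is again bounded Riesz, while $\sigma_b(A)=\sigma_b(A_M)\cup\sigma_b(A_N)$ with $0\notin\sigma_b(A_M)$ and $\sigma_b(A_N)\subseteq\{0\}$ gives $0\notin acc\,\sigma_b(A)$.

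The decisive step is $(v)\Rightarrow(i)$. Here $A=A_M\oplus A_N$ with $A_M$ Kato and $A_N$ bounded Riesz, and the task is to use $0\notin acc\,\sigma_b(A)$ to upgrade $A_M$ to a closed invertible operator, after which $(ii)\Rightarrow(i)$ closes the argument. Since a Riesz operator has $\sigma_b(A_N)\subseteq\{0\}$, one gets $0\notin acc\,\sigma_b(A_M)$, so on some punctured disc $D(0,\varepsilon)\setminus\{0\}$ the operator $\lambda-A_M$ is Browder, in particular semi-Fredholm of index zero. The Kato property of $A_M$ combined with the punctured-neighbourhood / continuity theorems on the semi-Fredholm domain (constancy of $\dim\mathcal{N}(\lambda-A_M)$ and of the relevant range-codimension on the connected punctured disc, together with the fact that a Kato operator has no jump at $0$) forces $\mathcal{N}(A_M)=\{0\}$ and $\mathcal{R}(A_M)=M$; closedness of $A_M$ and the closed graph theorem then yield $A_M$ closed invertible. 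I expect this Kato-to-invertible passage to be the main obstacle: one must carefully check that $A_M$ inherits closedness from $A$, that the semi-Fredholm perturbation and ascent/descent lemmas used in the bounded setting transfer to closed densely defined operators, and that the iterate-domain stabilization in the definition of $\widehat{\mathcal{C}}(X)$ is genuinely what makes the index-zero punctured-disc conclusion propagate to $\lambda=0$.
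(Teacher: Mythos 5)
This statement is quoted from the authors' earlier paper (\cite{AbZgMed}, Theorem 3.6) and is not proved in the present text, so the only comparison is with the argument that source (adapting the bounded-case proof of \cite{Ziv}) is known to use, and your reconstruction follows essentially that same route: the idempotent $P=I-AB$ and the induced reduction for (i)$\Leftrightarrow$(ii), componentwise bookkeeping for (ii)$\Rightarrow$(iv) and (iii)$\Leftrightarrow$(iv), the Riesz-point decomposition of a closed Browder operator for (iv)$\Rightarrow$(v), and Kato stability played against the Browder punctured disc for (v)$\Rightarrow$(i). The outline is sound and you correctly isolate where the real work lies (the closed-operator versions of the semi-regular stability and punctured-neighbourhood theorems, which is exactly where $\rho_e(A)\neq\emptyset$ and the domain stabilization in $\widehat{\mathcal{C}}(X)$ are needed); note only that on the punctured disc the nullity of $\lambda-A_M$ is constant merely off the discrete set of Riesz points of $A_M$, and it is their isolatedness in $\sigma(A_M)$ that forces the generic value, hence $\alpha(A_M)=\beta(A_M)$, to be zero.
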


If we assume in the previous theorem, that $A_N$ has infinite spectrum we get more
\begin{proposition} [Proposition 3.15 \cite{AbZgMed}] \label{Prop2.7}
Let $A\in \widehat{\mathcal{C}}(X)$, the next assertions are equivalent:
\begin{enumerate}
\item[(i)] $A \in GKRD(M,N)$, and there is a sequence with terms in $p_{00}(A)\setminus \{0\}$ converging to 0.
\item[(ii)] $A=A_{M} \oplus A_{N}$ such that $A_{N}$ is bounded Riesz with infinite spectrum and $A_{M}$ is closed invertible;
\end{enumerate}
\end{proposition}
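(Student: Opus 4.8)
The plan is to prove the two implications separately, using Theorem~\ref{theo 2.3} (applicable since $A\in\widehat{\mathcal{C}}(X)$) as the bridge between the reduction $(M,N)$ in (ii) and the spectral information in (i). Throughout I would use freely that along a reduction $(M,N)\in Red(A)$ the Fredholm and Browder data are additive: $\sigma(A)=\sigma(A_M)\cup\sigma(A_N)$, $\sigma_b(A)=\sigma_b(A_M)\cup\sigma_b(A_N)$, and a direct sum is Browder precisely when both summands are.

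For $(ii)\Rightarrow(i)$ I would argue directly. A closed invertible operator is trivially Kato, so the given decomposition $A=A_M\oplus A_N$ is already a generalized Kato-Riesz decomposition, i.e. $A\in GKRD(M,N)$; it remains to produce the sequence. Since $A_N$ is bounded Riesz, $\sigma(A_N)$ is at most countable with $0$ as its only possible accumulation point and $\sigma_b(A_N)=\{0\}$; as $\sigma(A_N)$ is infinite one may choose distinct $\lambda_n\in\sigma(A_N)\setminus\{0\}$ with $\lambda_n\to0$, and each $A_N-\lambda_n$ is then Browder on $N$. Because $0\in\rho(A_M)$ and $\rho(A_M)$ is open, $\lambda_n\in\rho(A_M)$ for large $n$, so $A-\lambda_n=(A_M-\lambda_n)\oplus(A_N-\lambda_n)$ is a direct sum of an invertible operator and a Browder operator, hence Browder, while $\lambda_n\in\sigma(A_N)\subseteq\sigma(A)$. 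Thus $\lambda_n\in\sigma(A)\cap\rho_b(A)=p_{00}(A)\setminus\{0\}$ and $\lambda_n\to0$, which is (i).

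For $(i)\Rightarrow(ii)$ the main step is to show $0\notin acc\,\sigma_b(A)$; granting this, $A\in GKRD(M,N)$ together with $0\notin acc\,\sigma_b(A)$ is exactly condition (v) of Theorem~\ref{theo 2.3}, and its condition (ii) then yields a pair $(M,N)\in Red(A)$ with $A_M$ closed invertible and $A_N$ bounded Riesz. To obtain $0\notin acc\,\sigma_b(A)$ I would write the decomposition from (i) as $A=A_{M'}\oplus A_{N'}$ with $A_{M'}$ Kato and $A_{N'}$ bounded Riesz; since $\sigma_b(A_{N'})=\{0\}$ it suffices to prove $0\notin acc\,\sigma_b(A_{M'})$. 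Here I would invoke the punctured neighbourhood theorem for the semi-regular operator $A_{M'}$: there is $\delta>0$ such that $A_{M'}-\lambda$ is Kato for $|\lambda|<\delta$ while $\dim\mathcal{N}(A_{M'}-\lambda)$ and $\mbox{codim}\,\mathcal{R}(A_{M'}-\lambda)$ are constant on $0<|\lambda|<\delta$. For large $n$ the point $\lambda_n$ lies in this punctured disc and $A_{M'}-\lambda_n$ is Browder (as $\lambda_n\notin\sigma_b(A)\supseteq\sigma_b(A_{M'})$), hence of finite ascent and descent; since a Kato operator of finite ascent is injective and an injective Browder operator is bijective (a Browder operator has index $0$), $A_{M'}-\lambda_n$ is bijective. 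Therefore the two constants above equal $0$, so $A_{M'}-\lambda$ is bijective for every $\lambda$ with $0<|\lambda|<\delta$, i.e. $\sigma_b(A_{M'})\cap D(0,\delta)\subseteq\{0\}$, whence $0\notin acc\,\sigma_b(A_{M'})$.

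It then remains to upgrade the decomposition $(M,N)$ furnished by Theorem~\ref{theo 2.3} to one with $\sigma(A_N)$ infinite. If $\sigma(A_N)$ were finite, then since $0$ lies in the open set $\rho(A_M)$ and $\sigma(A)=\sigma(A_M)\cup\sigma(A_N)$, a punctured disc about $0$ would be contained in $\rho(A)$, contradicting that the $\lambda_n\in\sigma(A)\setminus\{0\}$ converge to $0$; hence $A_N$ is bounded Riesz with infinite spectrum while $A_M$ is closed invertible, which is (ii). The step I expect to be the main obstacle is the semi-regular perturbation argument in the third paragraph: one must make sure that the punctured neighbourhood theorem (preservation of Kato-ness, and constancy of nullity and defect on a punctured disc) and the additivity of the Browder spectrum and of Browder-ness along a reduction are all available in the closed, possibly unbounded, setting; the remaining ingredients --- the structure of the spectrum of a bounded Riesz operator and the two elementary facts on Kato operators of finite ascent or descent --- are routine.
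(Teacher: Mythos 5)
This paper does not actually contain a proof of Proposition \ref{Prop2.7}: it is imported verbatim from \cite{AbZgMed} (Proposition 3.15 there), so your argument can only be judged on its own merits. Your implication (ii)$\Rightarrow$(i) is complete and unproblematic: a closed invertible operator is trivially Kato, the nonzero spectrum of the Riesz part is an infinite sequence tending to $0$ whose points are Browder points of $A_N$, and for large $n$ the decomposition $A-\lambda_n=(A_M-\lambda_n)\oplus(A_N-\lambda_n)$ is invertible $\oplus$ Browder, hence Browder, so $\lambda_n\in p_{00}(A)\setminus\{0\}$; the additivity of invertibility, Fredholmness, ascent and descent along a reduction, which you use here and again later for $\sigma_b(A)=\sigma_b(A_{M'})\cup\sigma_b(A_{N'})$, is routine for closed operators but should at least be recorded or cited.

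The weak point is the heart of (i)$\Rightarrow$(ii). You make the whole implication rest on a ``punctured neighbourhood theorem'' for the closed Kato operator $A_{M'}$ (openness of the set of semi-regular points plus constancy of $\dim\mathcal{N}(A_{M'}-\lambda)$ and $\mbox{codim}\,\mathcal{R}(A_{M'}-\lambda)$ near $0$), invoked as a black box. For bounded operators this is classical, but for closed densely defined operators it is exactly the nontrivial content behind the proposition: the perturbation $A_{M'}-\lambda$ must be handled by gap-metric (or resolvent-reduction) arguments, and the subspaces $\mathcal{R}(A_{M'}^n)$ involve the domains $\mathcal{D}(A^n)$ — this is precisely where the standing hypothesis $A\in\widehat{\mathcal{C}}(X)$ enters — so citing the bounded-case statement does not settle it; as written, the crux is assumed rather than proved. (A workable route is to note that $A_{M'}-\lambda_n$ Browder gives $\rho(A_{M'})\neq\emptyset$ and then transfer the bounded semi-regular stability theorem through a resolvent, but that has to be carried out.) Two further remarks: the correct semi-regular stability statement gives constancy of nullity and defect on the \emph{full} disc (there is no jump at a semi-regular point), so once you have it, $A_{M'}$ itself is already invertible and the detour through Theorem \ref{theo 2.3}(v) is unnecessary — the original GKRD pair $(M',N')$ witnesses (ii), with $\sigma(A_{N'})$ infinite by the same accumulation argument you give at the end; and your auxiliary facts (a Kato operator of finite ascent is injective, an injective Browder operator is bijective) are correct and easy to justify.
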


\begin{corollary} [Corollary 3.16 \cite{AbZgMed}] \label{Corollary 2.4}
Let $A \in \widehat{\mathcal{C}}(X)$ with $0 \in acc \ \sigma(A)$. If $A$ is closed generalized Drazin-Riesz invertible, then, there is a sequence with terms in $p_{00}(A) \setminus \{0\}$  which converges to $0$.
\end{corollary}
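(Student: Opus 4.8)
The plan is to combine Theorem \ref{theo 2.3} with Proposition \ref{Prop2.7}. Since $A$ is closed generalized Drazin-Riesz invertible, the implication (i)$\Rightarrow$(ii) of Theorem \ref{theo 2.3} furnishes a pair $(M,N)\in Red(A)$ for which $A_{M}$ is closed invertible and $A_{N}$ is bounded Riesz; thus $A=A_{M}\oplus A_{N}$ and $\sigma(A)=\sigma(A_{M})\cup\sigma(A_{N})$.

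The one point needing care is to transfer the hypothesis $0\in acc\,\sigma(A)$ to $A_{N}$. Because $A_{M}$ is invertible, $0\in\rho(A_{M})$, and $\rho(A_{M})$ is open, so one may pick $r_{0}>0$ with $\overline{D}(0,r_{0})\cap\sigma(A_{M})=\emptyset$. Then for every $0<r\le r_{0}$ one has $D(0,r)\cap\sigma(A)=D(0,r)\cap\sigma(A_{N})$, and since $0$ is an accumulation point of $\sigma(A)$ this disc contains a point of $\sigma(A_{N})$ different from $0$. Hence $0\in acc\,\sigma(A_{N})$, and in particular $\sigma(A_{N})$ is infinite.

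Finally, the reduction $(M,N)$ now realizes exactly condition (ii) of Proposition \ref{Prop2.7}: $A_{N}$ is bounded Riesz with infinite spectrum and $A_{M}$ is closed invertible. Invoking (ii)$\Rightarrow$(i) of that proposition, we obtain in particular a sequence with terms in $p_{00}(A)\setminus\{0\}$ converging to $0$, which is the assertion. I do not expect any real obstacle here; the only mildly technical step is the spectral splitting $\sigma(A)=\sigma(A_{M})\cup\sigma(A_{N})$ for the possibly unbounded reduced part $A_{M}$, together with the localization argument isolating $0$ from $\sigma(A_{M})$ using openness of the resolvent set.
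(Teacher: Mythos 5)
Your proof is correct and takes essentially the approach the paper intends: the corollary (quoted from \cite{AbZgMed}) is derived by combining the decomposition of Theorem \ref{theo 2.3} (i)$\Rightarrow$(ii) with Proposition \ref{Prop2.7} (ii)$\Rightarrow$(i), exactly as you do. The only delicate point, transferring $0\in acc\,\sigma(A)$ to $\sigma(A_{N})$ via the openness of $\rho(A_{M})$ and the splitting $\sigma(A)=\sigma(A_{M})\cup\sigma(A_{N})$ so that $\sigma(A_{N})$ is infinite, is handled correctly in your argument.
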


Let $A\in\mathcal{C}(X)$ such that $\rho(A)\neq\emptyset$. A subset $\sigma$ of $\sigma_{\infty}(A)$ is said to be a {\it spectral set} if it is both open and closed (clopen) in the relative topology of $\sigma_{\infty}(A)$ as a subset of $\mathbb{C}_{\infty}$. If $\sigma$ is a bounded spectral set of $A$, then there is $(M,N)\in Red(A)$ such that $\sigma(A_M)=\sigma(A)\setminus \sigma$ and $\sigma(A_N)=\sigma$; and the projection $P$ with $\mathcal{N}(P)=M$ and $\mathcal{R}(P)=N$ is the {\it spectral projection} $P_\sigma$ of $A$ relative to $\sigma$. Moreover, $$P_\sigma={1\over 2\pi i}\int_\Gamma (\lambda-A)^{-1}d\lambda$$ where $\Gamma$ is a contour surrounding $\sigma$ and $\sigma(A)\setminus\sigma$ is in its outside and does not intersect it \cite[p. 207]{A.Taylor}.
\smallskip

\noindent In \cite[Definition 2.2]{Tran}, Tran introduced the notion of the Drazin inverse in relation with a spectral set containing zero as follow
\begin{Definition}[Definition 2.2\cite{Tran}] \label{Def2.2Tran}
Let $A\in \mathcal{C}(X)$ be a non-invertible operator with a bounded spectral set $\sigma$ containing $0$ and let $P_{\sigma}$ be the corresponding spectral projection. The {\it Drazin inverse of $A$ related to} $\sigma$ is defined by
$$A^{D,\sigma}=(A- \xi P_{\sigma})^{-1}(I-P_{\sigma}),$$ for some $\xi \in \mathbb{C} \ \mbox{such that } | \xi | > 2 r$ where $r=\underset{\lambda \in \sigma}{\sup} | \lambda |.$
\end{Definition}

\noindent In particular, if $\sigma=\{0\}$, then $A$ is generalized Drazin invertible and $A^{D,\sigma}$ is its generalized Drazin inverse (see \cite{Kol}).
\smallskip

Let $A \in \widehat{\mathcal{C}}(X)$ be closed generalized Drazin-Riesz invertible such that $0 \in \ acc \ \sigma(A)$. By Corollary \ref{Corollary 2.4} and Proposition \ref{Prop2.7}, there is $(M,N) \in Red(A)$ provided that $A_{M}$ is closed invertible and $A_{N}$ is bounded Riesz with infinite spectrum. Since $0 \in \rho(A_{M})$, there is $\delta > 0 $ such that $D(0,\delta) \subset \rho(A_{M})$. Hence, $$(\sigma(A_{N}) \cap \overline{p_{00}(A)} \cap D(0,\delta) ) \subset \rho(A_{M})$$ and $$\sigma(A_{N}) \setminus ( \overline{p_{00}(A)} \cap D(0, \delta)) \subset \sigma_{\infty}(A_{M}).$$
By the proof of Corollary \ref{Corollary 2.4}, $$(\sigma(A_{N}) \cap \overline{p_{00}(A)} \cap D(0,\delta) )= \{ 0, \lambda_{1}, \lambda_{2},...\}$$ where $(\lambda_{n})$ with terms in $p_{00}(A) \setminus\{0\}$ that converges to $0$. Without loss of generality, we assume $(|\lambda_{n}|)$ is a decreasing sequence.

Now for every positive integer $n$, we define the closed sets $\sigma_{n}$ and $\sigma'_{n}$ by

\begin{center} $\sigma_{n}=\{0,\lambda_{n+1},\lambda_{n+2},...\}$ and $\sigma'_{n}=\sigma_{\infty}(A)\setminus \sigma_{n}$.
\end{center}
Hence $\sigma_n$ is a bounded spectral set of $A$ containing zero.
\begin{theorem} \label{DRform}
Let $A \in \widehat{\mathcal{C}}(X)$ be closed generalized Drazin-Riesz invertible with $0 \in  acc \, \sigma(A)$. Let $n$ be sufficiently large such that $r_{n}= \underset{\lambda \in \sigma_{n} }{\sup} \mid \lambda \mid < \frac{1}{2}$. Then $$A^{D,\sigma_{n}}=(A-P_{\sigma_{n}})^{-1}(I-P_{\sigma_{n}}),$$
is a generalized Drazin-Riesz inverse of $A$.
\end{theorem}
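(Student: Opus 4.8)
The strategy is to verify that $B:=(A-P_{\sigma_n})^{-1}(I-P_{\sigma_n})$ satisfies the three defining conditions~\eqref{DR} for a generalized Drazin-Riesz inverse, using the decomposition of $A$ induced by the spectral projection $P_{\sigma_n}$. Write $M_n=\mathcal N(P_{\sigma_n})$, $N_n=\mathcal R(P_{\sigma_n})$, so that $(M_n,N_n)\in Red(A)$ with $\sigma(A_{M_n})=\sigma'_n\cap\mathbb C=\sigma(A)\setminus\sigma_n$ and $\sigma(A_{N_n})=\sigma_n$. The point of choosing $n$ large (with $r_n<\tfrac12$) is that then $1\notin\sigma_n$, hence $A_{N_n}-I$ is invertible on the finite-or-infinite-dimensional piece $N_n$; meanwhile $0\notin\sigma(A_{M_n})$ because $0\in\sigma_n$, so $A_{M_n}$ is closed invertible. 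Consequently $A-P_{\sigma_n}=A_{M_n}\oplus(A_{N_n}-I)$ is a direct sum of two invertibles, so it is bijective from $\mathcal D(A)$ onto $X$ and, by the closed graph theorem, $(A-P_{\sigma_n})^{-1}\in\mathcal L(X)$; this is the observation that makes $B$ well defined and bounded, and it already shows $\xi=1$ is an admissible choice in Definition~\ref{Def2.2Tran} since $|\xi|=1>2r_n$.

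Next I would compute $B$ blockwise. Since $I-P_{\sigma_n}$ is the projection onto $M_n$ along $N_n$, and $(A-P_{\sigma_n})^{-1}=A_{M_n}^{-1}\oplus(A_{N_n}-I)^{-1}$ commutes with $P_{\sigma_n}$ (both are built from the spectral resolution of $A$), we get $B=A_{M_n}^{-1}\oplus 0$ on $M_n\oplus N_n$. From this all the algebraic identities are immediate: $AB=A_{M_n}A_{M_n}^{-1}\oplus 0=I_{M_n}\oplus 0=BA$ (and in particular $\mathcal R(B)\subset M_n\subset\mathcal D(A)$), $BAB=B$, and $I-AB=0_{M_n}\oplus I_{N_n}=P_{\sigma_n}$, whose range $N_n$ is contained in $\mathcal D(A)$ because $N_n$ reduces the closed operator $A$. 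Finally $A(I-AB)=A P_{\sigma_n}=0\oplus A_{N_n}$, and $A_{N_n}$ is bounded Riesz by hypothesis (recall $A_{N_n}$ is a restriction of the bounded Riesz part $A_N$ of $A$, acting on a spectral subspace, hence still bounded with $\sigma_e(A_{N_n})\subseteq\{0\}$). Therefore $A(I-AB)$ is bounded Riesz, and all three conditions in~\eqref{DR} hold, so $B$ is a generalized Drazin-Riesz inverse of $A$.

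The only genuinely delicate point is the very first one: checking that $(A-P_{\sigma_n})$ really splits as the claimed direct sum of invertible operators on the reducing pair $(M_n,N_n)$, i.e.\ that $P_{\sigma_n}$ restricts to $0$ on $M_n$ and to $I$ on $N_n$ and commutes with the parts of $A$. This uses the spectral-set machinery recalled just before the theorem: $\sigma_n$ is a bounded spectral set of $A$ (it is clopen in $\sigma_\infty(A)$ because the $\lambda_k$ are isolated points of $\sigma(A)$ accumulating only at $0$, and $\sigma_n$ contains a whole neighbourhood tail of them together with $0$), so $P_{\sigma_n}$ is the Riesz projection given by the contour integral, it commutes with $A$, and it induces the stated splitting with $\sigma(A_{N_n})=\sigma_n$, $\sigma(A_{M_n})=\sigma(A)\setminus\sigma_n$. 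Once this is in hand, the rest is the routine blockwise verification sketched above; I would also remark that by uniqueness of the Drazin inverse relative to a spectral set (Definition~\ref{Def2.2Tran}), $B$ coincides with $A^{D,\sigma_n}$ for every admissible $\xi$, not just $\xi=1$.
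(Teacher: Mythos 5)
Your proposal is correct and takes essentially the same route as the paper: you verify the three conditions of the definition of a generalized Drazin--Riesz inverse via the spectral projection $P_{\sigma_n}$, the block form $(A-P_{\sigma_n})^{-1}(I-P_{\sigma_n})=A_{M_n}^{-1}\oplus 0$, and the identity $I-AB=P_{\sigma_n}$. The only cosmetic difference is in the justifications: the paper cites Tran's Theorems 2.1 and 2.3 for the block form and for $\sigma(AP_{\sigma_n})=\sigma_n$, and deduces that $AP_{\sigma_n}$ is Riesz because every nonzero $\lambda$ gives a Browder operator (Aiena, Theorem 3.111), whereas you re-derive the invertibility of $A-P_{\sigma_n}$ blockwise and obtain the Riesz property by viewing $A_{N_n}$ as the restriction of the Riesz part $A_N$ to a spectral subspace (legitimate, since the spectral subspace has an invariant complement, so $\sigma_e(A_{N_n})\subseteq\sigma_e(A_N)=\{0\}$) --- equivalent arguments.
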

\begin{proof}
Showing that $A^{D,\sigma_{n}}$ is a generalized Drazin-Riesz inverse of $A$, means that we have to verify the following conditions
\begin{enumerate}
\item[(i)] $A^{D,\sigma_{n}} \in \mathcal{L}(X)$, $\mathcal{R}(A^{D,\sigma_{n}}) \subset  \mathcal{D}(A)$ and $\mathcal{R}(I-AA^{D,\sigma_{n}}) \subset \mathcal{D}(A)$;
\item[(ii)] $AA^{D,\sigma_{n}}=A^{D,\sigma_{n}}A$, and $A^{D,\sigma_{n}}AA^{D,\sigma_{n}}=A^{D,\sigma_{n}}$;
\item[(iii)] $A(I-AA^{D,\sigma_{n}})$ is bounded Riesz.
\end{enumerate}

(i).  By virtue of \cite[Theorem 2.3]{Tran}, we have $A^{D,\sigma_{n}}=A_{\mathcal{N}(P_{\sigma_{n}})}^{-1} \oplus 0_{\mathcal{R}(P_{\sigma_{n}})}$, and then $A^{D,\sigma_{n}} \in \mathcal{L}(X)$. We have $P_{\sigma_{n}}=I-AA^{D,\sigma_{n}}$. Indeed, as  $P_{\sigma_{n}}$ commutes with $A$, we get that $P_{\sigma_{n}}(A-P_{\sigma_{n}})=(A-P_{\sigma_{n}})P_{\sigma_{n}}$.  Hence \\
 $P_{\sigma_{n}}(A-P_{\sigma_{n}})^{-1}=(A-P_{\sigma_{n}})^{-1}P_{\sigma_{n}}$. Also, 
\begin{align*}                 
                 AA^{D,\sigma_{n}}&= (A-P_{\sigma_{n}}+P_{\sigma_{n}})(A-P_{\sigma_{n}})^{-1}(I-P_{\sigma_{n}})\\
                 &=(A-P_{\sigma_{n}})(A-P_{\sigma_{n}})^{-1}(I-P_{\sigma_{n}})(I-P_{\sigma_{n}}) \\
                 &\ \ + P_{\sigma_{n}}(I-P_{\sigma_{n}})(A-P_{\sigma_{n}})^{-1}\\
                 &=I-P_{\sigma_{n}}.
\end{align*}
Therefore, $P_{\sigma_{n}}=I-AA^{D,\sigma_{n}}$.
\smallskip

\noindent Since $\mathcal{D}(A_{\mathcal{N}(P_{\sigma_{n}})})=\mathcal{N}(P_{\sigma_{n}})\cap \mathcal{D}(A)$, we get
$$\mathcal{R}(A^{D,\sigma_{n}})=\mathcal{R}(A_{\mathcal{N}(P_{\sigma_{n}})}^{-1}) \oplus \mathcal{R}(0_{\mathcal{R}(P_{\sigma_{n}})})=\mathcal{D}(A_{\mathcal{N}(P_{\sigma_{n}})}) \oplus \{0\}= \mathcal{D}(A_{\mathcal{N}(P_{\sigma_{n}})}).$$
 Consequently, $\mathcal{R}(A^{D,\sigma_{n}})\subset \mathcal{D}(A)$. Now, by \cite[Theorem 2.1 (i)]{Tran}, we have 
  $$\mathcal{R}(I-AA^{D,\sigma_{n}})=\mathcal{R}(P_{\sigma_{n}}) \subset \mathcal{D}(A).$$
\noindent (ii). As $A$ and $P_{\sigma_{n}}$ commute, we get that $A(A-P_{\sigma_{n}})=(A-P_{\sigma_{n}})A$, hence $A(A-P_{\sigma_{n}})^{-1}=(A-P_{\sigma_{n}})^{-1}A$. Thus
\begin{align*}
AA^{D,\sigma_{n}}&=A (A-P_{\sigma_{n}})^{-1}(I-P_{\sigma_{n}})\\
                 &=(A-P_{\sigma_{n}})^{-1}(I-P_{\sigma_{n}})A\\
                 &=A^{D,\sigma_{n}}A. 
\end{align*}
Also
\begin{align*} A^{D,\sigma_{n}}AA^{D,\sigma_{n}}&=(A-P_{\sigma_{n}})^{-1}(I-P_{\sigma_{n}})A A^{D,\sigma_{n}}\\
                                                           &=(A-P_{\sigma_{n}})^{-1}(I-P_{\sigma_{n}})(A-P_{\sigma_{n}}+P_{\sigma_{n}})A^{D,\sigma_{n}}\\
                                                           &=(A-P_{\sigma_{n}})^{-1}(A-P_{\sigma_{n}})(I-P_{\sigma_{n}})A^{D,\sigma_{n}}\\
                                                           &=(I-P_{\sigma_{n}})A^{D,\sigma_{n}}=A^{D,\sigma_{n}}.
\end{align*}
(iii). From $\mathcal{R}(P_{\sigma_{n}}) \subset \mathcal{D}(A)$ we have $AP_{\sigma_{n}}\in \mathcal{L}(X)$. Now by  \cite[Theorem 2.1]{Tran}, $\sigma(AP_{\sigma_{n}})=\sigma_{n}$. Since $\sigma_{n}$ consists of 0 and a sequence with terms in $p_{00}(A)\setminus \{0\}$ converging to 0, then for all $\lambda \in \mathbb{C}\setminus \{0\}$, $\lambda I - AP_{\sigma_{n}}$ is bounded Browder. Therefore, by \cite[Theorem 3.111]{Aie}, and as $(I-AA^{D,\sigma_{n}})=P_{\sigma_{n}}$, hence, $A(I-AA^{D,\sigma_{n}})$ is bounded Riesz.
\end{proof}

\noindent Following \cite{Mbekhta}, we define and denote the quasinilpotent part of a closed operator by $$H_{0}(A) := \{ x \in \mathcal{D}^{\infty}(A) \ : \ \underset{n \rightarrow +\infty}{\lim} ||A^{n}x||^{\frac{1}{n}}= 0 \}.$$
Also, by \cite[Theorem 3.1]{Kol2}, if $\mu_{0} \in iso\, \sigma(A)$, then 
\begin{equation}
\label{QuasinilpartandRangeofProj} \mathcal{R}(P_{\mu_{0}})= H_{0}(\mu_{0} I - A)\neq \{0\}. 
\end{equation}
The following theorem shows that a closed generalized Drazin-Riesz invertible operator may have more than one inverse.
\begin{theorem} \label{TheoDRinvdiff}
Let $A \in \widehat{\mathcal{C}}(X)$ be closed generalized Drazin-Riesz invertible with $0 \in acc\, \sigma(A)$. Let $n_{0},n_{1} \in \mathbb{N}$ be sufficiently large provided that $n_{0} < n_{1}$ and $r_{n_{0}} < \frac{1}{2}$. Then
$$A^{D,\sigma_{n_{0}}} \neq A^{D,\sigma_{n_{1}}}.$$
\end{theorem}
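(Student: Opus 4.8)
The plan is to show that the two generalized Drazin--Riesz inverses differ by isolating a vector on which they disagree, using the structure $A^{D,\sigma_{n}} = A_{\mathcal{N}(P_{\sigma_{n}})}^{-1}\oplus 0_{\mathcal{R}(P_{\sigma_{n}})}$ established in the proof of Theorem \ref{DRform}. First I would record that for $n_{0}<n_{1}$ we have $\sigma_{n_{1}}=\{0,\lambda_{n_{1}+1},\lambda_{n_{1}+2},\dots\}\subsetneq\{0,\lambda_{n_{0}+1},\dots\}=\sigma_{n_{0}}$, the inclusion being strict because $\lambda_{n_{0}+1},\dots,\lambda_{n_{1}}\in\sigma_{n_{0}}\setminus\sigma_{n_{1}}$ (here we use that the $\lambda_n$ are pairwise distinct, being a sequence in $p_{00}(A)\setminus\{0\}$ with $(|\lambda_n|)$ strictly decreasing, which we may assume). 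Consequently the corresponding spectral projections $P_{\sigma_{n_{0}}}$ and $P_{\sigma_{n_{1}}}$ are distinct: indeed $P_{\sigma_{n_{0}}}-P_{\sigma_{n_{1}}}$ is the spectral projection of $A$ associated with the (finite, nonempty) spectral set $\sigma_{n_{0}}\setminus\sigma_{n_{1}}=\{\lambda_{n_{0}+1},\dots,\lambda_{n_{1}}\}$, whose range is $\bigoplus_{k=n_{0}+1}^{n_{1}}\mathcal{R}(P_{\lambda_k})$, and by \eqref{QuasinilpartandRangeofProj} each $\mathcal{R}(P_{\lambda_k})=H_0(\lambda_k I-A)\neq\{0\}$.

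Next I would translate the difference of projections into a difference of inverses. Pick any nonzero $x\in\mathcal{R}(P_{\lambda_{n_{1}}})\subset\mathcal{R}(P_{\sigma_{n_{0}}})$ with $x\notin\mathcal{R}(P_{\sigma_{n_{1}}})$; since $\mathcal{R}(P_{\sigma_{n_{1}}})$ reduces $A$ and $\lambda_{n_{1}}\in\sigma(A_{\mathcal{R}(P_{\sigma_{n_{1}}})})$ while $\lambda_{n_1}\notin\sigma(A_{\mathcal{R}(P_{\sigma_{n_0}})})$... — more cleanly, I would simply evaluate. Because $A^{D,\sigma_{n_{1}}}=I-AA^{D,\sigma_{n_1}}$-complement decomposition gives $A^{D,\sigma_{n_{1}}}x=0$ for $x\in\mathcal{R}(P_{\sigma_{n_{1}}})$, whereas on the complementary part $A^{D,\sigma_{n_1}}$ acts as the invertible operator $A_{\mathcal{N}(P_{\sigma_{n_1}})}^{-1}$. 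So take $0\neq x\in\mathcal{R}(P_{\lambda_{n_{1}}})$: then $x\in\mathcal{R}(P_{\sigma_{n_{1}}})$, hence $A^{D,\sigma_{n_{1}}}x=0$; on the other hand $x\in\mathcal{N}(P_{\sigma_{n_{0}}})$ (as $\lambda_{n_1}\notin\sigma_{n_0}$), so $A^{D,\sigma_{n_{0}}}x=A_{\mathcal{N}(P_{\sigma_{n_{0}}})}^{-1}x$, and this is nonzero because $A_{\mathcal{N}(P_{\sigma_{n_0}})}$ is injective and $x\neq 0$. Therefore $A^{D,\sigma_{n_{0}}}x\neq 0=A^{D,\sigma_{n_{1}}}x$, which proves $A^{D,\sigma_{n_{0}}}\neq A^{D,\sigma_{n_{1}}}$.

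The only genuine subtlety, and the step I would be most careful about, is the bookkeeping of which spectral set each $\lambda_k$ belongs to — one must make sure the index conventions in the definition $\sigma_n=\{0,\lambda_{n+1},\lambda_{n+2},\dots\}$ are used consistently so that $\lambda_{n_1}\in\sigma_{n_0}$ yet $\lambda_{n_1}\notin\sigma_{n_1}$, and that $\lambda_{n_1}$ is genuinely an isolated point of $\sigma(A)$ so that \eqref{QuasinilpartandRangeofProj} applies and $\mathcal{R}(P_{\lambda_{n_1}})\neq\{0\}$ furnishes the required nonzero vector. Everything else is a direct computation with the reduced form of $A^{D,\sigma_{n}}$ and the elementary fact that distinct spectral sets of $A$ produce distinct spectral projections, which is itself immediate from the functional calculus together with \eqref{QuasinilpartandRangeofProj}.
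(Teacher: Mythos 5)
Your overall strategy is the right one and is in substance the same as the paper's: decompose $A^{D,\sigma_{n}}=A_{\mathcal{N}(P_{\sigma_{n}})}^{-1}\oplus 0_{\mathcal{R}(P_{\sigma_{n}})}$, observe that $\sigma_{n_{0}}\setminus\sigma_{n_{1}}$ is a nonempty finite spectral set whose spectral projection has nonzero range by (\ref{QuasinilpartandRangeofProj}), and conclude that the two inverses disagree on that subspace (the paper computes the difference globally as $0_{\mathcal{N}(P_{\sigma_{n_{0}}})}\oplus(-A^{-1}_{\mathcal{R}(P_{\sigma_{n_{0}}\setminus\sigma_{n_{1}}})})\oplus 0_{\mathcal{R}(P_{\sigma_{n_{1}}})}$, while you evaluate at a single vector; either works). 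However, your decisive evaluation step is stated with the memberships reversed, and as written it is false. Since $\sigma_{n}=\{0,\lambda_{n+1},\lambda_{n+2},\dots\}$ and $n_{0}<n_{1}$, we have $\sigma_{n_{1}}\subsetneq\sigma_{n_{0}}$ and $\lambda_{n_{1}}\in\sigma_{n_{0}}\setminus\sigma_{n_{1}}$ — exactly as you say in your first and last paragraphs. Consequently, for $0\neq x\in\mathcal{R}(P_{\lambda_{n_{1}}})$ one has $x\in\mathcal{R}(P_{\sigma_{n_{0}}})$ (because $P_{\lambda_{n_{1}}}P_{\sigma_{n_{0}}}=P_{\lambda_{n_{1}}}$) and $x\in\mathcal{N}(P_{\sigma_{n_{1}}})$ (because $\{\lambda_{n_{1}}\}$ and $\sigma_{n_{1}}$ are disjoint spectral sets, so $P_{\lambda_{n_{1}}}P_{\sigma_{n_{1}}}=0$). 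Your middle paragraph asserts the opposite, namely $x\in\mathcal{R}(P_{\sigma_{n_{1}}})$, $x\in\mathcal{N}(P_{\sigma_{n_{0}}})$, and parenthetically ``$\lambda_{n_{1}}\notin\sigma_{n_{0}}$'', which contradicts both the definition of $\sigma_{n_{0}}$ and your own earlier statement; the same reversal appears in the aside about $\sigma(A_{\mathcal{R}(P_{\sigma_{n_{1}}})})$.

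The repair is immediate: with the correct memberships, $A^{D,\sigma_{n_{0}}}x=0$ (since $x\in\mathcal{R}(P_{\sigma_{n_{0}}})$), whereas $A^{D,\sigma_{n_{1}}}x=A^{-1}_{\mathcal{N}(P_{\sigma_{n_{1}}})}x\neq 0$ by injectivity of $A^{-1}_{\mathcal{N}(P_{\sigma_{n_{1}}})}$, and the nonexistence of a zero vector to start from is guaranteed by (\ref{QuasinilpartandRangeofProj}) applied to the isolated spectral point $\lambda_{n_{1}}\in p_{00}(A)$, giving $\mathcal{R}(P_{\lambda_{n_{1}}})=H_{0}(\lambda_{n_{1}}I-A)\neq\{0\}$. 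So the conclusion $A^{D,\sigma_{n_{0}}}\neq A^{D,\sigma_{n_{1}}}$ does follow, but only after swapping the roles of the two inverses in your computation; as submitted, the step that is supposed to clinch the theorem rests on incorrect claims, so you should rewrite that paragraph consistently with your own index bookkeeping before the proof can be accepted.
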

\begin{proof}
Suppose that $n_{0},n_{1} \in \mathbb{N}$ such that $n_{0}<n_{1}$ and $r_{n_{0}}< \frac{1}{2}$. Then
$$\sigma_{n_{0}}=\{0,\lambda_{n_{0}+1},...\}, \ \mbox{and } \sigma_{n_{1}}=\{0,\lambda_{n_{1}+1},...\}.$$
Thus we have $P_{\sigma_{n_{0}}}=I_{\mathcal{R}(P_{\sigma_{n_{0}}\setminus \sigma_{n_{1}}})} \oplus I_{\mathcal{R}(P_{\sigma_{n_{1}}})},$ with $\sigma(AP_{\sigma_{n_{0}}\setminus \sigma_{n_{1}}})=\sigma_{n_{0}}\setminus \sigma_{n_{1}}$. Hence $AP_{\sigma_{n_{0}}\setminus \sigma_{n_{1}}}$ is invertible.
In addition, $A_{\mathcal{R}(P_{\sigma_{n_{0}}})}=A_{\mathcal{R}(P_{\sigma_{n_{0}}\setminus \sigma_{n_{1}}})} \oplus A_{\mathcal{R}(P_{\sigma_{n_{1}}})}.$\\
Therefore $A=(A_{\mathcal{N}(P_{\sigma_{n_{0}}})}\oplus A_{\mathcal{R}(P_{\sigma_{n_{0}}\setminus \sigma_{n_{1}}})}) \oplus A_{\mathcal{R}(P_{\sigma_{n_{0}}})}.$\\
As $\mathcal{N}(P_{\sigma_{n_{1}}})=\mathcal{N}(P_{\sigma_{n_{0}}})\oplus \mathcal{R}(P_{\sigma_{n_{0}}\setminus \sigma_{n_{1}}})$, we obtain from  \cite [Theorem 2.3 ]{Tran} that
\begin{align*}
A^{D,\sigma_{n_{1}}} & =A^{-1}_{\mathcal{N}(P_{\sigma_{n_{1}}})}\oplus 0_{\mathcal{R}(P_{\sigma_{n_{1}}})} \\
                     & = A^{-1}_{\mathcal{N}(P_{\sigma_{n_{0}}})} \oplus A^{-1}_{\mathcal{R}(P_{\sigma_{n_{0}}\setminus \sigma_{n_{1}}})} \oplus 0_{\mathcal{R}(P_{\sigma_{n_{1}}})}.
\end{align*}
 By  virtue of \cite[Theorem V.9.1]{Taylor2} and (\ref{QuasinilpartandRangeofProj}), we have $$\mathcal{R}(P_{\sigma_{n_{0}}\setminus \sigma_{n_{1}}})= \underset{k \in \{n_{0}+1,...,n_{1}+2 \}}{\bigoplus}H_{0}(\lambda_{k}I-A) \neq \{0\}.$$ 
 Consequently $$A^{D,\sigma_{n_{0}}} - A^{D,\sigma_{n_{1}}} = 0_{\mathcal{N}(P_{\sigma_{n_{0}}})} \oplus -A^{-1}_{\mathcal{R}(P_{\sigma_{n_{0}}\setminus \sigma_{n_{1}}})} \oplus 0_{\mathcal{R}(P_{\sigma_{n_{1}}})} \neq 0.$$
\end{proof}

\begin{theorem} \label{thm3.3}
Let $A \in \widehat{\mathcal{C}}(X)$ be closed generalized Drazin-Riesz invertible with $0\in acc\, \sigma(A)$.  Let $R \in \mathcal{L}(X)$ be a Riesz operator satisfying $\mathcal{R}(R) \subset \mathcal{D}(A)$ and $AR=RA$. Then $A+R$ is closed generalized Drazin-Riesz invertible.
\end{theorem}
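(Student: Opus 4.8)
The plan is to use the characterization of closed generalized Drazin-Riesz invertibility given in Theorem \ref{theo 2.3}, specifically the equivalence (i) $\Leftrightarrow$ (ii): since $A$ is closed generalized Drazin-Riesz invertible, there is a pair $(M,N)\in Red(A)$ with $A_M$ closed invertible and $A_N$ bounded Riesz. The strategy is to show that this same reduction $(M,N)$ is also reducing for $R$, and then to decompose $A+R = (A_M+R_M)\oplus(A_N+R_N)$, verifying that the first summand is still closed invertible (or at least closed Browder) and the second is still bounded Riesz.

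First I would establish that $(M,N)\in Red(R)$. Since $R$ commutes with $A$, one expects $R$ to commute with the spectral projection $P$ associated to the clopen spectral decomposition into $M$ and $N$ — here I would invoke the integral formula $P = \frac{1}{2\pi i}\int_\Gamma(\lambda-A)^{-1}\,d\lambda$ together with the fact that $R$ commutes with the resolvent $(\lambda-A)^{-1}$ (a standard consequence of $AR=RA$ and $\mathcal{R}(R)\subset\mathcal{D}(A)$), so that $RP=PR$; this gives $R(M)\subset M$ and $R(N)\subset N$, hence $(M,N)\in Red(R)$ and we may write $R=R_M\oplus R_N$ with $R_M\in\mathcal{L}(M)$, $R_N\in\mathcal{L}(N)$. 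A point to check carefully is that $\mathcal{R}(R)\subset\mathcal{D}(A)$ is compatible with the decomposition so that $\mathcal{R}(R_M)\subset\mathcal{D}(A_M)$; this is automatic since $\mathcal{D}(A_M)=M\cap\mathcal{D}(A)$.

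Next I would handle the two summands separately. For $A_N$: it is bounded Riesz on the Banach space $N$ (and $N$ is infinite-dimensional by Proposition \ref{Prop2.7}, since $0\in acc\,\sigma(A)$), and $R_N$ is a bounded Riesz operator on $N$ commuting with $A_N$. By the perturbation theory of Riesz operators — the sum of two commuting bounded Riesz operators is Riesz, since $\sigma_e$ is well-behaved under commuting perturbations and $\sigma_e(A_N)=\sigma_e(R_N)=\{0\}$ forces $\sigma_e(A_N+R_N)=\{0\}$ — $A_N+R_N$ is bounded Riesz. For $A_M$: here $A_M$ is closed invertible, i.e. $0\in\rho(A_M)$, and $R_M$ is bounded Riesz commuting with $A_M$. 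Since $0\notin\sigma(A_M)$ and $\sigma_e(R_M)=\{0\}$, the essential spectrum of $A_M+R_M$ does not contain... well, one must be slightly careful: $A_M+R_M$ need not be invertible, but $0\notin acc\,\sigma(A_M+R_M)$ and moreover $0\notin\sigma_e(A_M+R_M)$ (a commuting Riesz perturbation cannot destroy invertibility modulo the Browder-type spectrum; more precisely $\sigma_b(A_M+R_M)=\sigma_b(A_M)$ for commuting Riesz $R_M$, and $0\notin\sigma_b(A_M)$ since $A_M$ is invertible). Thus $A_M+R_M$ is closed Browder, or $0$ is at worst a Riesz point; in either case, after possibly absorbing the finitely many spectral points of $A_M+R_M$ near $0$ into the "$N$ part", $A+R$ fits the form (iv) or (v) of Theorem \ref{theo 2.3}.

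The main obstacle I anticipate is twofold. First, ensuring $A+R\in\widehat{\mathcal{C}}(X)$ so that Theorem \ref{theo 2.3} applies to $A+R$: one needs $\rho_e(A+R)\neq\emptyset$ (which follows from $\rho_e(A)\neq\emptyset$ and $R$ Riesz, since commuting Riesz perturbations preserve $\sigma_e$) and the eventual stabilization $\mathcal{D}((A+R)^{j+1})=\mathcal{D}((A+R)^j)$ for large $j$ — this requires checking that adding $R$ (with $\mathcal{R}(R)\subset\mathcal{D}(A)$, indeed $\mathcal{R}(R)\subset\mathcal{D}^\infty(A)$ would help) does not enlarge the domains of powers, which should follow from $\mathcal{D}((A+R)^j)=\mathcal{D}(A^j)$ by induction using $\mathcal{R}(R)\subset\mathcal{D}^\infty(A)$; this last containment itself needs justification from $\mathcal{R}(R)\subset\mathcal{D}(A)$ and $AR=RA$. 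Second, and more delicate, is cleanly arguing that $A_M+R_M$ is closed Browder rather than merely "not too bad at $0$"; the safe route is to use the stability of the Browder spectrum under commuting Riesz perturbations for closed operators, which should already be available in the cited references (or provable from the Kato/Riesz decomposition machinery), so that $0\notin\sigma_b(A_M+R_M)$ directly, and then conclude via Theorem \ref{theo 2.3}(iv).
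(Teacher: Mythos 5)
Your overall strategy is the same as the paper's: commute $R$ with the spectral projection of the clopen decomposition via the Riesz--Dunford integral (the paper uses the projection $P_{\sigma_n}$ coming from the spectral set $\sigma_n$ constructed in Section 2, which is exactly the "spectral" choice of $(M,N)$ you need), split $A+R=(A_M+R_M)\oplus(A_N+R_N)$, and treat $A_N+R_N$ as a sum of two commuting bounded Riesz operators, which is Riesz by \cite[Theorem 3.112]{Aie}. Up to that point your plan matches the paper step for step. The one place where your argument is not actually closed is the $M$-part: you invoke "stability of the Browder spectrum under commuting Riesz perturbations for closed operators", i.e. $\sigma_b(A_M+R_M)=\sigma_b(A_M)$, as a result that "should be available in the cited references". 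For unbounded closed operators this is not off-the-shelf in the bibliography, and as stated it is essentially the statement you are trying to prove, restricted to the invertible part; leaving it to an unspecified reference is the genuine gap in your proposal. The paper closes exactly this step with a short, elementary factorization: since $A_M$ is closed invertible and commutes with the Riesz operator $R_M$, the operator $A_M^{-1}R_M$ is bounded Riesz (commuting product with a Riesz factor, \cite[Theorem 3.112]{Aie}), hence $I_M+A_M^{-1}R_M$ is bounded Browder (\cite[Theorem 3.111]{Aie}), and then $A_M+R_M=A_M(I_M+A_M^{-1}R_M)$ is closed Browder by the product theorem \cite[Theorem 2.1]{AbZgMed}; Theorem \ref{theo 2.3}(iv) then finishes the proof. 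Replacing your appeal to spectral stability by this factorization makes your argument complete and is the only substantive difference from the paper.

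Two smaller remarks. First, your concern about $A+R\in\widehat{\mathcal{C}}(X)$ is legitimate (the paper is silent on it), but it does not require $\mathcal{R}(R)\subset\mathcal{D}^{\infty}(A)$, which indeed need not hold: from $\mathcal{R}(R)\subset\mathcal{D}(A)$ and $AR=RA$ one shows by induction that $R\,\mathcal{D}(A^{k})\subset\mathcal{D}(A^{k+1})$, and from this that $\mathcal{D}((A+R)^{j})=\mathcal{D}(A^{j})$ for all $j$, so the eventual domain stabilization transfers from $A$ to $A+R$; nonemptiness of $\rho_e(A+R)$ can be read off from the decomposition (closed Browder $\oplus$ bounded Riesz) itself. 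Second, the "absorbing finitely many spectral points of $A_M+R_M$ near $0$ into the $N$ part" manoeuvre you sketch as a fallback is unnecessary once the Browder property of $A_M+R_M$ is established directly.
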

\begin{proof}
As $A \in \widehat{\mathcal{C}}(X)$ is closed generalized Drazin-Riesz invertible and $0 \in  acc\, \sigma(A)$, there exists $n \in \mathbb{N}$ such that $A^{D,\sigma_{n}}$ is a generalized Drazin-Riesz inverse for $A$, and $P_{\sigma_{n}}=I-AA^{D,\sigma_{n}}$ is the spectral projection of $A$ among $\sigma_{n}$.  Since $AR=RA$, we have $(\lambda I - A ) R= R( \lambda I -A)$ for all $\lambda \in \Delta$, also, we have $\mathcal{R}((\lambda I - A)^{-1})=\mathcal{D}(A)$. Hence $R(\lambda I - A )^{-1}=(\lambda I-A)^{-1}R$ in $X$.

 \cite[Lemma 3.4]{AbZgMed} ensures that $\mathcal{R}(P_{\sigma_{n}}) \subset \mathcal{D}(A)$, hence $RP_{\sigma_{n}}(X) \subset \mathcal{D}(A) $ because $\mathcal{R}(R) \subset \mathcal{D}(A)$, and $\mathcal{R}(P_{\sigma_{n}}R)\subset \mathcal{D}(A)$. Thus 
\begin{align*}
P_{\sigma_{n}}R & = (\frac{1}{2 \pi i}\int_{\Delta}(\lambda I - A)^{-1} d\lambda)R=\frac{1}{2 \pi i} \int_{\Delta} (\lambda I - A)^{-1}R d\lambda\\
                & = \frac{1}{2 \pi i}\int_{\Delta} R (\lambda I -A)^{-1}d \lambda= R(\frac{1}{2\pi}\int_{\Delta}(\lambda I - A)^{-1}d\lambda)=RP_{\sigma_{n}}, \ \mbox{ in } X
\end{align*}
where $\Delta$ is the frontier of a bounded Cauchy domain $D$, such that $\sigma_{n} \subset D$, and $\sigma_{n}^{'} \cap \bar{D} =\emptyset$.

\noindent Set $M=\mathcal{N}(P_{\sigma_{n}})$ and $N=\mathcal{R}(P_{\sigma_{n}})$, then $R=R_{M} \oplus R_{N}$ and so $A+R=(A_{M\cap \mathcal{D}(A)}+R_{M\cap \mathcal{D}(A)}) \oplus (A_{N}+R_{N})$.
As $A_{N}$ and $R_{N}$ commute and are bounded Riesz, $A_{N}+R_{N}$ is  Riesz by \cite[Theorem 3.112]{Aie}. \\
In addition, $A_{M}$ is closed invertible and $R_{M}$ is Riesz and  they both commute in $M \cap \mathcal{D}(A)$. Hence $A_{M}^{-1}$ commutes with $R_{M}$ in $M$, and using \cite[Theorem 3.112]{Aie}, $A_{M}^{-1}R_{M}$ is bounded Riesz. Thus $I_{M}+A_{M}^{-1}R_{M}$ is bounded Browder by \cite[Theorem 3.111]{Aie}. Therefore, $A_{M\cap \mathcal{D}(A)}+R_{M \cap \mathcal{D}(A)}=A_{M}(I_{M}+A_{M}^{-1}R_{M})_{M \cap \mathcal{D}(A)}=(I_{M}+A_{M}^{-1}R_{M})A_{M}$ is closed Browder by virtue of Theorem 2.1 \cite{AbZgMed}.
Finally, from Theorem \ref{theo 2.3}, we conclude that $A+R$ is closed generalized Drazin-Riesz invertible.
\end{proof}

We define and denote the generalized Drazin-Riesz spectrum of $A \in \mathcal{C}(X)$ as follow 
$$\sigma_{DR}(A):=\{ \lambda \in \mathbb{C} \ : \ \lambda I - A \mbox{ is not closed generalized Drazin-Riesz invertible} \},$$ 

As a direct consequence of Theorem \ref{thm3.3} we describe the generalized Drazin-Riesz spectrum of $A \in \widehat{\mathcal{C}}(X)$ as follow.

\begin{corollary}
Let $A \in \widehat{\mathcal{C}}(X)$ such that $0 \in acc\, \sigma(A)$. We have
$$\sigma_{DR}(A+R) = \sigma_{DR}(A),$$
for all $R$ a bounded Riesz operator such that $\mathcal{R}(R) \subset \mathcal{D}(A)$  and $AR=RA$.
\end{corollary}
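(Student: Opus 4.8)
The plan is to prove the equality $\sigma_{DR}(A+R)=\sigma_{DR}(A)$ by establishing both inclusions, and the key observation is that the hypotheses are symmetric in $A$ and $A+R$ once one notes that $R$ is also Riesz and $-R$ is Riesz. So it suffices to prove one inclusion, say $\sigma_{DR}(A+R)\subseteq\sigma_{DR}(A)$, and then apply it with $A$ replaced by $A+R$ and $R$ replaced by $-R$ (which is still a bounded Riesz operator with $\mathcal{R}(-R)\subset\mathcal{D}(A)=\mathcal{D}(A+R)$ and $(A+R)(-R)=(-R)(A+R)$, since $AR=RA$ forces $(A+R)$ and $R$ to commute in the appropriate sense) to get the reverse inclusion.

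For the inclusion itself, I would argue by contraposition: fix $\lambda\in\mathbb{C}$ with $\lambda\notin\sigma_{DR}(A)$, i.e. $\lambda I-A$ is closed generalized Drazin-Riesz invertible, and show $\lambda I-(A+R)=(\lambda I-A)-R$ is closed generalized Drazin-Riesz invertible. The natural tool is Theorem \ref{thm3.3}, applied to the operator $\lambda I-A$ in place of $A$ and to $-R$ in place of $R$: indeed $-R$ is a bounded Riesz operator, $\mathcal{R}(-R)\subset\mathcal{D}(A)=\mathcal{D}(\lambda I-A)$, and $(\lambda I-A)(-R)=(-R)(\lambda I-A)$ follows from $AR=RA$. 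One must also check the standing membership $\lambda I-A\in\widehat{\mathcal{C}}(X)$; this holds because $\lambda I - A$ has the same domains $\mathcal{D}((\lambda I-A)^j)=\mathcal{D}(A^j)$ up to the stabilization index (a short induction using the binomial expansion of $(\lambda I-A)^j$), and $\rho_e(\lambda I-A)=\lambda-\rho_e(A)\neq\emptyset$. The only genuine subtlety is the hypothesis ``$0\in acc\,\sigma(\lambda I-A)$'' required by Theorem \ref{thm3.3}, equivalently $\lambda\in acc\,\sigma(A)$: if instead $\lambda\notin acc\,\sigma(A)$ then $\lambda I-A$ is already generalized Drazin invertible (hence closed generalized Drazin-Riesz invertible) and one invokes Theorem \ref{theo 2.3} together with the fact that a commuting Riesz perturbation preserves this stronger property by essentially the same reduction argument — or more simply one notes that then $0\notin acc\,\sigma_b(\lambda I-A)$, and since a commuting Riesz perturbation does not change the Browder spectrum on a punctured neighbourhood of $0$, the hypothesis $0\in acc\,\sigma(A)$ assumed in the corollary's statement already rules this case in for the relevant $\lambda$; in the end the corollary is really about $\lambda$ near the accumulation point, and for $\lambda$ bounded away from $acc\,\sigma(A)$ the classical perturbation theory for the Browder spectrum under commuting Riesz perturbations (Theorem 3.111--3.112 of \cite{Aie}) handles it directly.

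So the core of the proof is: reduce to one inclusion by symmetry; for that inclusion, split into the case $\lambda\in acc\,\sigma(A)$, handled verbatim by Theorem \ref{thm3.3} applied to $\lambda I-A$ and $-R$, and the case $\lambda\notin acc\,\sigma(A)$, handled by the stability of $0\notin acc\,\sigma_b(\cdot)$ under commuting Riesz perturbations together with Theorem \ref{theo 2.3}(v) (the GKRD part being preserved by the same $P_{\sigma_n}$-decomposition used in the proof of Theorem \ref{thm3.3}). I expect the main obstacle to be bookkeeping rather than conceptual: one must be careful that all the commutation relations ``$A$ and $B$ commute'' in the sense defined in the introduction (range conditions plus the algebraic identity) transfer correctly to $\lambda I-A$, $-R$, and their resolvents, and that the reduction $(M,N)$ produced for $\lambda I-A$ is simultaneously reducing for $-R$ — but this is exactly what was already verified inside the proof of Theorem \ref{thm3.3} via the integral-formula argument for $P_{\sigma_n}R=RP_{\sigma_n}$, so no new idea is needed.

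\begin{proof}
Let $R\in\mathcal{L}(X)$ be a bounded Riesz operator with $\mathcal{R}(R)\subset\mathcal{D}(A)$ and $AR=RA$. Then $-R$ is also a bounded Riesz operator with $\mathcal{R}(-R)\subset\mathcal{D}(A)$ and $A(-R)=(-R)A$; moreover, for every $\lambda\in\mathbb{C}$ one has $\mathcal{D}(A+R)=\mathcal{D}(A)$ and $AR=RA$ implies $(\lambda I-(A+R))$ and $\pm R$ commute in the sense of the introduction. Finally $\lambda I-A\in\widehat{\mathcal{C}}(X)$ whenever $A\in\widehat{\mathcal{C}}(X)$, since $\rho_e(\lambda I-A)=\lambda-\rho_e(A)\neq\emptyset$ and $\mathcal{D}((\lambda I-A)^{j})=\mathcal{D}(A^{j})$ for all $j$, by expanding $(\lambda I-A)^{j}$ binomially; the same holds for $A+R$.

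It suffices to prove the inclusion $\sigma_{DR}(A+R)\subseteq\sigma_{DR}(A)$, for then applying this inclusion with $A$ replaced by $A+R$ and $R$ replaced by $-R$ yields $\sigma_{DR}(A)=\sigma_{DR}((A+R)+(-R))\subseteq\sigma_{DR}(A+R)$, and equality follows.

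So let $\lambda\notin\sigma_{DR}(A)$, i.e. $\lambda I-A$ is closed generalized Drazin-Riesz invertible; we show $\lambda I-(A+R)=(\lambda I-A)+(-R)$ is closed generalized Drazin-Riesz invertible. If $\lambda\in acc\,\sigma(A)=acc\,\sigma(\lambda I-A)+\lambda$, apply Theorem \ref{thm3.3} to the operator $\lambda I-A\in\widehat{\mathcal{C}}(X)$ and the commuting bounded Riesz operator $-R$ (whose range lies in $\mathcal{D}(\lambda I-A)=\mathcal{D}(A)$): we conclude that $(\lambda I-A)+(-R)$ is closed generalized Drazin-Riesz invertible, as desired. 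If $\lambda\notin acc\,\sigma(A)$, then $0\notin acc\,\sigma(\lambda I-A)$, so by \cite{Kol} the operator $\lambda I-A$ is generalized Drazin invertible; in particular $0\notin acc\,\sigma_b(\lambda I-A)$ and $\lambda I-A$ admits a trivial GKRD. By \cite[Theorem 3.111 and Theorem 3.112]{Aie}, the Browder spectrum of a closed operator is unchanged by a commuting bounded Riesz perturbation on a neighbourhood of any point, so $0\notin acc\,\sigma_b((\lambda I-A)+(-R))$; the spectral projection argument in the proof of Theorem \ref{thm3.3} shows that $(\lambda I-A)+(-R)$ admits a GKRD as well. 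Hence by Theorem \ref{theo 2.3}(v), $(\lambda I-A)+(-R)$ is closed generalized Drazin-Riesz invertible. In both cases $\lambda\notin\sigma_{DR}(A+R)$, which completes the proof.
\end{proof}
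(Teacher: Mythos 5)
Your main line is exactly the argument the paper intends: the corollary is stated as a direct consequence of Theorem \ref{thm3.3}, i.e. for $\lambda\notin\sigma_{DR}(A)$ one applies Theorem \ref{thm3.3} to $\lambda I-A$ and the commuting bounded Riesz operator $-R$, and the reverse inclusion follows by symmetry because $-R$ is again Riesz, commutes with $A+R$, and $\mathcal{R}(-R)\subset\mathcal{D}(A)=\mathcal{D}(A+R)$. You also correctly flag the point the paper silently passes over: Theorem \ref{thm3.3} carries the hypothesis $0\in acc\,\sigma(\cdot)$, so it literally covers only those $\lambda$ with $\lambda\in acc\,\sigma(A)$, and the case $\lambda\notin acc\,\sigma(A)$ needs its own argument.

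Your justification of that second case, however, has a genuine gap. Aiena's Theorems 3.111--3.112 in \cite{Aie} concern bounded operators (Riesz operators and commuting sums/products); they do not say that the Browder spectrum of a \emph{closed} operator is invariant under commuting Riesz perturbations, so the step ``$0\notin acc\,\sigma_b((\lambda I-A)-R)$'' is unsupported as cited, and the route through Theorem \ref{theo 2.3}(v) moreover requires an actual GKRD for the perturbed operator $(\lambda I-A)-R$, which you do not construct. The clean repair is to avoid (v) altogether: if $\lambda\notin acc\,\sigma(A)$, then $\lambda I-A$ is generalized Drazin invertible by \cite{Kol}, hence admits a reduction $(M,N)$ with $(\lambda I-A)_M$ closed invertible and $(\lambda I-A)_N$ bounded quasinilpotent; the integral-formula argument in the proof of Theorem \ref{thm3.3} shows that $R$ commutes with the corresponding spectral projection, so $R=R_M\oplus R_N$, and then $((\lambda I-A)-R)_N$ is bounded Riesz (commuting quasinilpotent plus Riesz), while $((\lambda I-A)-R)_M=(\lambda I-A)_M\bigl(I_M-(\lambda I-A)_M^{-1}R_M\bigr)$ is closed Browder exactly as in Theorem \ref{thm3.3}; Theorem \ref{theo 2.3}(iv) then yields generalized Drazin--Riesz invertibility of $\lambda I-(A+R)$. (Alternatively, once Theorem \ref{theo3.4} is in hand, one can argue through $\sigma_{DR}=acc\,\sigma_b$.) With that substitution your proof is complete, and it is in fact more careful than the paper's one-line ``direct consequence''.
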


In the following theorem we show that the condition "GKRD" in statement v) of Theoem \ref{theo 2.3} can be omitted.
\begin{theorem} \label{theo3.4}
Let $A \in \widehat{\mathcal{C}}(X)$. Then the following assertions are equivalent:\\
i) The closed operator $A$ is  generalized Drazin-Riesz invertible;\\ ii) $0 \notin  acc\, \sigma_{b}(A)$.
\end{theorem}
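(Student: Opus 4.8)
The plan is to establish the two implications separately, leaning on Theorem \ref{theo 2.3}.

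\textbf{Direction i) $\Rightarrow$ ii).} Suppose $A$ is closed generalized Drazin-Riesz invertible. By Theorem \ref{theo 2.3}, there is $(M,N)\in Red(A)$ with $A_M$ closed invertible and $A_N$ bounded Riesz. Since $A_M$ is closed invertible, $0\in\rho(A_M)$, hence $0\notin\sigma_b(A_M)$ and in fact $0\notin acc\,\sigma_b(A_M)=acc\,\emptyset$ (or more precisely $0$ has a punctured neighbourhood disjoint from $\sigma(A_M)\supseteq\sigma_b(A_M)$). On the other side, $A_N\in\mathcal{L}(N)$ is Riesz, so $\sigma_b(A_N)\subseteq\sigma(A_N)\subseteq\{0\}$ together with the fact that every nonzero point of $\sigma(A_N)$ is a Riesz point; thus $\sigma_b(A_N)\subseteq\{0\}$, which gives $0\notin acc\,\sigma_b(A_N)$ trivially. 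Finally $\sigma_b(A)=\sigma_b(A_M)\cup\sigma_b(A_N)$ for a reduction, so $0\notin acc\,\sigma_b(A)$.

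\textbf{Direction ii) $\Rightarrow$ i).} Assume $0\notin acc\,\sigma_b(A)$. The goal is to produce a GKRD for $A$ so that condition (v) of Theorem \ref{theo 2.3} applies. There are two cases. If $0\notin\sigma_b(A)$, then $A$ is closed Browder, hence trivially generalized Drazin-Riesz invertible (take $N=\{0\}$, $M=X$). If $0\in\sigma_b(A)$, then by hypothesis $0\in iso\,\sigma_b(A)$, so there is $\delta>0$ with $\overline{D}(0,\delta)\cap\sigma_b(A)=\{0\}$. In particular every $\lambda$ with $0<|\lambda|\le\delta$ lies in $\rho_b(A)$, so $\lambda I-A$ is Browder; combined with $0\in acc\,\sigma(A)$ or not, the punctured disc consists of points of $\rho(A)$ together with finitely-many-multiplicity Riesz points. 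The key point is that $\{0\}$ (or a small spectral set around it) is a clopen subset of $\sigma_\infty(A)$: since the Browder spectrum near $0$ reduces to $\{0\}$, one shows $acc\,\sigma(A)\cap D(0,\delta)\subseteq\{0\}$, whence there is a bounded spectral set $\sigma_0$ of $A$ contained in $\overline{D}(0,\delta)$ with corresponding spectral projection $P_{\sigma_0}\in\mathcal{L}(X)$ and reduction $(M,N)=(\mathcal{N}(P_{\sigma_0}),\mathcal{R}(P_{\sigma_0}))\in Red(A)$ such that $\sigma(A_N)=\sigma_0$ and $0\in\rho(A_M)$, so $A_M$ is closed invertible. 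It remains to see $A_N$ is bounded Riesz: $A_N\in\mathcal{L}(N)$ and $\sigma(A_N)=\sigma_0\subseteq\overline{D}(0,\delta)$ with every nonzero point of $\sigma_0$ a Riesz point of $A$ (hence of $A_N$, since isolated points of $\sigma_b(A)$ away from $0$ don't exist in this disc), so $\sigma_e(A_N)\subseteq\{0\}$; one must rule out $\sigma_e(A_N)=\emptyset$, but that forces $N$ finite-dimensional and $A_N$ trivially Riesz anyway, and otherwise $\sigma_e(A_N)=\{0\}$, i.e.\ $A_N$ is Riesz. Then condition (ii) of Theorem \ref{theo 2.3} holds and $A$ is closed generalized Drazin-Riesz invertible.

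\textbf{Main obstacle.} The delicate step is the construction of the spectral projection in the case $0\in\sigma_b(A)$: one needs that the condition $0\in iso\,\sigma_b(A)$ genuinely separates $0$ from the rest of $\sigma_\infty(A)$ as a clopen set, which requires knowing that near $0$ the full spectrum $\sigma(A)$ can only accumulate at $0$ itself (so that a contour $\Gamma$ surrounding $\sigma_0$ and missing $\sigma(A)\setminus\sigma_0$ exists). This is exactly the point where the hypothesis $A\in\widehat{\mathcal{C}}(X)$ and the structure theory behind Theorem \ref{theo 2.3} (together with Corollary \ref{Corollary 2.4} and Proposition \ref{Prop2.7}) must be invoked; once the reduction $A=A_M\oplus A_N$ is in hand the verification that $A_M$ is invertible and $A_N$ is bounded Riesz is routine, and the conclusion is immediate from Theorem \ref{theo 2.3}(ii) or (v).
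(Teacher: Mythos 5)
Your argument is correct and follows essentially the same route as the paper: the substantive implication ii)~$\Rightarrow$~i) is proved by cutting out a bounded spectral set consisting of $0$ together with the nearby Riesz points (which works because Riesz points are isolated in $\sigma(A)$, so $acc\,\sigma(A)\subseteq\sigma_b(A)$ and a spectrum-free circle around $0$ can be chosen), then invoking the spectral decomposition and Theorem~\ref{theo 2.3}. The only differences are cosmetic: the paper disposes of the easy case via $0\notin acc\,\sigma(A)$ and Koliha--Tran generalized Drazin invertibility rather than Browderness of $A$, and it obtains i)~$\Rightarrow$~ii) directly from condition (v) of Theorem~\ref{theo 2.3} instead of your union-of-Browder-spectra argument; also, the clopen-ness step needs only the isolation of Riesz points (plus nonempty resolvent), not Corollary~\ref{Corollary 2.4} or Proposition~\ref{Prop2.7}.
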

\begin{proof}
According to Theorem \ref{theo 2.3} it suffices to show the reverse sense. Suppose that $0 \notin  acc\, \sigma_{b}(A)$. If $0 \notin acc\, \sigma(A)$, then $A$ is closed generalized  Drazin invertible by \cite{Kol}. Thus, it is closed generalized  Drazin-Riesz invertible.\\
So assume that $0 \in  acc\, \sigma(A)$. If $0 \notin acc\ p_{00}(A)$ then $0 \notin acc\, \sigma_{b}(A) \cup acc\, p_{00}(A)=acc\, (\sigma_{b}(A) \cup p_{00}(A))=acc\, \sigma(A)$, which is a contradiction. Consequently, $0\in acc\ p_{00}(A)$.
\smallskip

Now, suppose that $0$ is not the only limit point of $p_{00}(A)$ in $\sigma_{\infty}(A)$. Therefore, we have
$$ 0 < \eta =\underset{\lambda \in (acc \, p_{00}(A))\setminus \{0\}}{\inf}(|\lambda|) \leq +\infty .$$
If it is not the case, therefore $\eta = 0$, hence there is a sequence $(\lambda_{n})$ in $(acc\, p_{00}(A))\setminus \{0\}$ which converges to $0$, as $n \rightarrow \infty$, this implies that $(\lambda_{n}) \subset (acc\, p_{00}(A))\setminus \{0\} \subset acc\, \sigma(A) \subset \sigma_{b}(A)$, therefore $0$ is a limit point of $\sigma_{b}(A)$, which is a contradiction.
\smallskip
Consequently, by putting $\eta_{1}=\min(\eta,\frac{1}{2})$, the elements of $\overline{D(0,\frac{\eta_{1}}{2})} \cap p_{00}(A)$ constitute a sequence with terms in $p_{00}(A)$ that converges to $0$ (notice that the adherence here is related to the topology of $\mathbb{C}_{\infty}$).
Hence $$\omega(A)=\overline{D}(0,\frac{\eta_{1}}{2})\cap \overline{p_{00}(A)}$$ and $$\sigma_{\infty}(A)\setminus \omega(A) = ((\sigma_{b}(A)\cup \{\infty\})\setminus \{0\})\cup (\overline{p_{00}(A)}\setminus \omega(A))$$ are spectral sets of $\sigma_{\infty}(A)$. Thus, by virtue of the spectral decomposition, there is $(M,N)\in Red(A)$, with $A_{M}$ is closed invertible and $A_{N}$ is bounded Riesz. Consequently, Theorem \ref{theo 2.3} leads to conclude that $A$ is closed generalized Drazin-Riesz invertible.

\noindent In the case where $acc\, p_{00}(A)= \{0\}$, we can take $\omega(A)=\overline{D}(0,\frac{1}{4}) \cap \overline{p_{00}(A)}$ to find that $\omega(A)$ consists of a sequence with terms in $p_{00}(A) \setminus \{0\}$ (converging to $0$) and of  $0$. Thus, we conclude that $\omega(A)$ and $\sigma_{\infty}(A)\setminus \omega(A)$ are spectral sets of $\sigma_{\infty}(A)$. By the spectral decomposition there is $(M,N) \in Red(A)$ with $A_{M}$ is closed invertible and $A_{N}$ is bounded Riesz. Theorem \ref{theo 2.3} ensures that $A$ is closed generalized Drazin-Riesz invertible.
\end{proof}

As a direct consequence of Theorem \ref{theo3.4}, we give the following result.
\begin{corollary}
Let $A \in \widehat{\mathcal{C}}(X)$, then we have $\sigma_{DR}(A)=acc\, \sigma_{b}(A)$.
\end{corollary}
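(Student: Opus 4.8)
The plan is to derive this corollary directly from Theorem \ref{theo3.4}, which characterizes closed generalized Drazin-Riesz invertibility of $A \in \widehat{\mathcal{C}}(X)$ by the spectral condition $0 \notin acc\, \sigma_b(A)$. The only extra ingredient needed is a translation argument: for $\lambda \in \mathbb{C}$, the operator $\lambda I - A$ still lies in $\widehat{\mathcal{C}}(X)$, so Theorem \ref{theo3.4} applies to it and yields that $\lambda I - A$ is closed generalized Drazin-Riesz invertible if and only if $0 \notin acc\, \sigma_b(\lambda I - A)$.

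First I would check that $\widehat{\mathcal{C}}(X)$ is stable under the shift $A \mapsto \lambda I - A$. The resolvent condition $\rho_e(\lambda I - A) \neq \emptyset$ holds because $\rho_e(\lambda I - A) = \lambda - \rho_e(A)$ (the essential resolvent is merely translated), and the domain-stabilization condition $\mathcal{D}((\lambda I - A)^{j+1}) = \mathcal{D}((\lambda I - A)^j)$ for large $j$ follows from $\mathcal{D}((\lambda I - A)^k) = \mathcal{D}(A^k)$ for all $k$, which is immediate since $\lambda I$ is bounded and everywhere defined. Next I would use the spectral mapping $\sigma_b(\lambda I - A) = \lambda - \sigma_b(A)$, valid because the Browder spectrum is translation covariant (Fredholmness, finite ascent and descent are all preserved under adding a scalar), hence $acc\, \sigma_b(\lambda I - A) = \lambda - acc\, \sigma_b(A)$. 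Therefore $0 \notin acc\, \sigma_b(\lambda I - A)$ if and only if $\lambda \notin acc\, \sigma_b(A)$.

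Putting these together: by Theorem \ref{theo3.4} applied to $\lambda I - A$, the point $\lambda$ fails to belong to $\sigma_{DR}(A)$ — i.e. $\lambda I - A$ is closed generalized Drazin-Riesz invertible — precisely when $0 \notin acc\, \sigma_b(\lambda I - A)$, which by the previous paragraph is precisely when $\lambda \notin acc\, \sigma_b(A)$. Taking complements gives $\sigma_{DR}(A) = acc\, \sigma_b(A)$.

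I do not anticipate a serious obstacle here; the content is entirely in Theorem \ref{theo3.4}, and the corollary is a formal consequence. The one point that deserves a sentence of care is the verification that $\lambda I - A \in \widehat{\mathcal{C}}(X)$ whenever $A \in \widehat{\mathcal{C}}(X)$, since Theorem \ref{theo3.4} is stated only for operators in that class; this is routine but should be made explicit so that the application of the theorem is legitimate for every $\lambda$.
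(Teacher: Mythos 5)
Your proposal is correct and takes essentially the same route as the paper, which presents the corollary as a direct consequence of Theorem \ref{theo3.4} applied pointwise to $\lambda I - A$. Your explicit checks that $\lambda I - A \in \widehat{\mathcal{C}}(X)$ (via $\mathcal{D}((\lambda I - A)^k)=\mathcal{D}(A^k)$ and the translated essential resolvent) and that $acc\,\sigma_b(\lambda I - A)=\lambda - acc\,\sigma_b(A)$ simply supply the routine details the paper leaves implicit.
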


\begin{theorem}\label{theo3.5}
Let $A \in \widehat{\mathcal{C}}(X)$ with $0 \in  acc \, \sigma(A)$ and $A$ is closed generalized Drazin-Riesz invertible. Then, there is some $n_{0} \in \mathbb{N}$ satisfying that
 $$A^{D,\sigma_{n_{0}}}=h(A),$$ 
 where $h$ is a holomorphic function on some open neighborhood of $\sigma_{\infty}(A)$, such that $h(\lambda)=0$ in some open neighborhood of $\sigma_{n_{0}}$ and $h(\lambda)=\lambda^{-1}$ in some open neighborhood of $\sigma_{\infty}(A) \setminus \sigma_{n_{0}}$. In that case
 \begin{align*}\sigma(A^{D,\sigma_{n_{0}}})&=\{0\} \cup \{ \lambda^{-1} \ / \ \lambda \in \sigma(A) \setminus \sigma_{n_{0}}   \} \\
                                           &=\{0\} \cup \{ \lambda^{-1} \ / \ \lambda \in \sigma_{\infty}(A) \setminus (\sigma_{n_{0}} \cup \{\infty\})   \}  .
 \end{align*}
\end{theorem}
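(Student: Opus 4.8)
The plan is to recognise $A^{D,\sigma_{n_{0}}}$ as the operator $h(A)$ produced by the (extended) Riesz--Dunford holomorphic functional calculus for closed operators, exploiting that $\sigma_{n_{0}}$ is a clopen piece of the compactified spectrum $\sigma_{\infty}(A)$. First I would fix $n_{0}\in\mathbb{N}$ large enough that $r_{n_{0}}<\frac{1}{2}$; then Theorem \ref{DRform} applies, so $A^{D,\sigma_{n_{0}}}=(A-P_{\sigma_{n_{0}}})^{-1}(I-P_{\sigma_{n_{0}}})$ is a generalized Drazin-Riesz inverse of $A$, and by \cite[Theorem 2.3]{Tran} it equals $A^{-1}_{M}\oplus 0_{N}$, where $M=\mathcal{N}(P_{\sigma_{n_{0}}})$ and $N=\mathcal{R}(P_{\sigma_{n_{0}}})$. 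By the spectral decomposition and \cite[Theorem 2.1]{Tran}, $A=A_{M}\oplus A_{N}$ with $A_{M}$ closed invertible, $\sigma(A_{M})=\sigma(A)\setminus\sigma_{n_{0}}$, and $A_{N}$ bounded with $\sigma(A_{N})=\sigma_{n_{0}}$; equivalently $\sigma_{\infty}(A_{M})=\sigma'_{n_{0}}$.

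Next I would build $h$. The sets $\sigma_{n_{0}}$ and $\sigma'_{n_{0}}=\sigma_{\infty}(A)\setminus\sigma_{n_{0}}$ are clopen in $\sigma_{\infty}(A)$, hence disjoint compact subsets of $\mathbb{C}_{\infty}$, with $0\in\sigma_{n_{0}}$ and $\infty\in\sigma'_{n_{0}}$; moreover $\sigma_{n_{0}}$ is compact in $\mathbb{C}$ while $\sigma(A)\setminus\sigma_{n_{0}}$ is closed in $\mathbb{C}$ (because $\sigma'_{n_{0}}$ is closed in $\mathbb{C}_{\infty}$), so the two are at positive Euclidean distance and in particular $0$ is bounded away from $\sigma'_{n_{0}}$. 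Choose disjoint open sets $U_{0}\supset\sigma_{n_{0}}$ (bounded, inside $\mathbb{C}$) and $U_{1}\supset\sigma'_{n_{0}}$ (containing a neighbourhood of $\infty$ and omitting $0$), and define $h$ on $U:=U_{0}\cup U_{1}$ by $h\equiv 0$ on $U_{0}$ and $h(\lambda)=\lambda^{-1}$ on $U_{1}$, with $h(\infty)=0$. Since $\lambda\mapsto\lambda^{-1}$ extends holomorphically across $\infty$ with value $0$, $h$ is holomorphic on the neighbourhood $U$ of $\sigma_{\infty}(A)$ in $\mathbb{C}_{\infty}$, so $h(A)\in\mathcal{L}(X)$ is well defined by the extended functional calculus for closed operators with non-empty resolvent (cf. \cite{Taylor2,A.Taylor}).

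Then I would identify $h(A)$. The resolvent of $A$ and the spectral projection $P_{\sigma_{n_{0}}}$ both decompose along $X=M\oplus N$, so the functional calculus is compatible with this reduction and $h(A)=h(A_{M})\oplus h(A_{N})$. Because $h$ vanishes on the neighbourhood $U_{0}$ of $\sigma(A_{N})=\sigma_{n_{0}}$, one gets $h(A_{N})=0_{N}$; because $h(\lambda)=\lambda^{-1}$ on the neighbourhood $U_{1}$ of $\sigma_{\infty}(A_{M})=\sigma'_{n_{0}}$ and $0\in\rho(A_{M})$, the standard property of the functional calculus that sends $\lambda\mapsto\lambda^{-1}$ to $A_{M}^{-1}$ gives $h(A_{M})=A_{M}^{-1}$. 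Hence $h(A)=A_{M}^{-1}\oplus 0_{N}=A^{D,\sigma_{n_{0}}}$, which is the first assertion.

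Finally, the spectral formula follows from the spectral mapping theorem for the extended functional calculus: $\sigma(A^{D,\sigma_{n_{0}}})=\sigma(h(A))=h(\sigma_{\infty}(A))=h(\sigma_{n_{0}})\cup h(\sigma'_{n_{0}})=\{0\}\cup\{\lambda^{-1}:\lambda\in\sigma(A)\setminus\sigma_{n_{0}}\}$, where the value $h(\infty)=0$ at the point at infinity is already contained in $\{0\}$; the second displayed form is the same set since $\infty\notin\sigma(A)\cup\sigma_{n_{0}}$. The step needing most care is the bookkeeping at infinity: one must invoke the correct version of the Riesz--Dunford calculus and of the spectral mapping theorem for unbounded closed operators with non-empty resolvent, verify that $h$ is genuinely holomorphic at $\infty$, and justify the compatibility of this calculus with the spectral splitting $X=M\oplus N$ — everything else is routine.
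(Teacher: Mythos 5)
Your argument is correct and, at bottom, follows the same strategy as the paper: both realize $A^{D,\sigma_{n_{0}}}$ as $h(A)$ for the very same admissible function ($h=0$ near $\sigma_{n_{0}}$, $h(\lambda)=\lambda^{-1}$ near $\sigma'_{n_{0}}$, $h(\infty)=0$) in Taylor's extended holomorphic calculus, and both finish with the spectral mapping theorem of \cite[Theorem 9.1]{A.Taylor}. The difference lies in how the identity $h(A)=A^{D,\sigma_{n_{0}}}$ is verified. The paper chooses $\epsilon$ and $n_{0}$ using $0\notin acc\,\sigma_{b}(A)$ (Theorem \ref{theo3.4}), writes $P_{\sigma_{n_{0}}}=f(A)$ for the indicator $f$ of a small disc separating $\sigma_{n_{0}}$ from $\sigma'_{n_{0}}$, and identifies $h(\lambda)=(1-f(\lambda))(\lambda+f(\lambda))^{-1}$ with $(A+P_{\sigma_{n_{0}}})^{-1}(I-P_{\sigma_{n_{0}}})$, i.e.\ Definition \ref{Def2.2Tran} with $\xi=-1$; you instead take $\xi=1$ through Theorem \ref{DRform}, invoke Tran's explicit form $A^{D,\sigma_{n_{0}}}=A_{M}^{-1}\oplus 0_{N}$ \cite[Theorem 2.3]{Tran}, and compute $h(A)=h(A_{M})\oplus h(A_{N})=A_{M}^{-1}\oplus 0_{N}$ from the compatibility of the calculus with the spectral reduction, locality of the calculus, and the fact that $\lambda\mapsto\lambda^{-1}$ is sent to $A_{M}^{-1}$ when $0\in\rho(A_{M})$. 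Your version spells out exactly which properties of the extended calculus are being used (reduction-compatibility, the inverse function, dependence only on germs near the spectrum), which is arguably tidier than the paper's algebraic shortcut, since $\lambda\mapsto\lambda+f(\lambda)$ is not itself admissible at $\infty$ and the product/inversion rules behind $h=(1-f)(\lambda+f)^{-1}$ are left implicit there; the two choices of $\xi$ give the same operator by Tran's theorem, so there is no discrepancy, and I see no gap in your proposal.
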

\begin{proof}
Since $A \in \widehat{\mathcal{C}}(X)$ is closed generalized  Drazin-Riesz invertible, we have through Theorem \ref{theo3.4} that  $0 \notin acc \, \sigma_{b}(A)$. Hence there is some $\epsilon > 0$ such that $D(0,\epsilon) \cap \sigma_{b}(A)= \emptyset$, we are able to choose $n_{0}$ and $\epsilon$ such that $\sigma_{n_{0}} \cap D(0,\epsilon)=\sigma_{n_{0}}$, and $D(0,\epsilon)\cap \sigma_{n_{0}}^{'}=\emptyset$.
Consequently, $D(0,\epsilon)$ is an open neighborhood of $\sigma_{n_{0}}$, while $(\mathbb{C}_{\infty}\setminus \overline{D}(0, \epsilon))$ is an open neighborhood of $\sigma_{n_{0}}^{'}$.
 Let $f=1$ in $D(0,\epsilon)$ and $f=0$ in $\mathbb{C}_{\infty}\setminus \overline{D}(0,\epsilon)$, then by \cite[p. 321]{A.Taylor},  $P_{\sigma_{n_{0}}}=f(A)$.\\
By virtue of Definition \ref{Def2.2Tran} $A^{D,\sigma_{n_{0}}}=(A+P_{\sigma_{n_{0}}})^{-1}(I-P_{\sigma_{n_{0}}})=h(A)$, with $h(\lambda)=(1-f(\lambda))(\lambda + f(\lambda))^{-1}$.\\
The form of $h$ leads to conclude that $h(\lambda)=0$ in an open neighborhood of $\sigma_{n_{0}}$ and $h(\lambda)=\lambda^{-1}$ in an open neighborhood of $\sigma_{\infty}(A) \setminus \sigma_{n_{0}}$ with $\underset{\lambda \rightarrow \infty}{\lim} \lambda^{-1}=0$. By the spectral mapping theorem \cite[Theorem 9.1]{A.Taylor}, we obtain
$$\sigma(A^{D,\sigma_{n_{0}}})=\sigma(h(A))=h(\sigma_{\infty}(A))=\{0\} \cup \{ \lambda^{-1} \ / \ \lambda \in \sigma_{\infty}(A) \setminus (\sigma_{n_{0}} \cup \{\infty \} )\}.$$
\end{proof}

\begin{corollary} \label{IntegralrepofDRinv}
Let $A \in \widehat{\mathcal{C}}(X)$ be closed generalized  Drazin-Riesz invertible. Then, there is some $p \in \mathbb{N}$ such that $A^{D,\sigma_{p}}=\frac{1}{2 \pi i} \int_{\Gamma} \lambda^{-1}R(\lambda;A)d\lambda$, where $\Gamma$ is the clockwise oriented boundary of an unbounded Cauchy domain $D$ containing $\sigma'_{p}$, $\sigma_{p} \cap \bar{D} = \emptyset$, and $\Gamma$ is oriented clockwise.
\end{corollary}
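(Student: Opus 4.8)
The plan is to combine Theorem \ref{theo3.5} with the integral representation of the holomorphic functional calculus for closed operators with non-empty resolvent set. We may assume $0 \in acc\, \sigma(A)$, so that the sets $\sigma_{n}$, $\sigma'_{n}$ introduced before Theorem \ref{DRform} are defined and Theorem \ref{theo3.5} applies; when $0 \notin acc\, \sigma(A)$ the operator $A$ is generalized Drazin invertible by \cite{Kol} and the assertion reduces to the classical integral formula for the generalized Drazin inverse.

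By Theorem \ref{theo3.5} there is $p := n_{0} \in \mathbb{N}$ such that $A^{D,\sigma_{p}} = h(A)$, where $h$ is holomorphic on an open neighborhood of $\sigma_{\infty}(A)$ in $\mathbb{C}_{\infty}$, $h \equiv 0$ on an open neighborhood of $\sigma_{p}$, and $h(\lambda) = \lambda^{-1}$ on an open neighborhood of $\sigma'_{p} = \sigma_{\infty}(A) \setminus \sigma_{p}$; in particular $h(\infty) = \lim_{\lambda \to \infty} \lambda^{-1} = 0$. The next ingredient is the integral representation of the functional calculus \cite{A.Taylor}: for every $f$ holomorphic on a neighborhood of $\sigma_{\infty}(A)$,
$$f(A) = f(\infty)\, I + \frac{1}{2\pi i} \int_{\partial \Omega} f(\lambda)\, (\lambda I - A)^{-1}\, d\lambda,$$
where $\Omega$ is any Cauchy domain with $\sigma_{\infty}(A) \subset \Omega$ and $\overline{\Omega}$ (taken in $\mathbb{C}_{\infty}$) contained in the domain of holomorphy of $f$, with $\partial \Omega$ oriented positively with respect to $\Omega$.

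I would then use that $\sigma_{\infty}(A) = \sigma_{p} \sqcup \sigma'_{p}$ is a disjoint union of two sets clopen in $\sigma_{\infty}(A)$, with $\sigma_{p}$ bounded and $\infty \in \sigma'_{p}$. Since these two compact sets have disjoint neighborhoods, one can take $\Omega = D_{1} \cup D$ with $\overline{D_{1}} \cap \overline{D} = \emptyset$, where $D_{1}$ is a bounded Cauchy domain with $\sigma_{p} \subset D_{1}$ and $\overline{D_{1}}$ inside the neighborhood on which $h \equiv 0$, and $D$ is an unbounded Cauchy domain with $\sigma'_{p} \subset D$ (hence $\infty \in D$) and $\overline{D}$ (in $\mathbb{C}_{\infty}$) inside the neighborhood on which $h(\lambda) = \lambda^{-1}$. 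Then $\sigma_{p} \cap \overline{D} = \emptyset$, so $0 \notin \overline{D}$ and $\lambda \mapsto \lambda^{-1}$ is bounded on the compact curve $\partial D$; moreover the integrand $h(\lambda)(\lambda I - A)^{-1}$ vanishes identically on $\partial D_{1}$ and equals $\lambda^{-1} R(\lambda;A)$ on $\partial D$. As $h(\infty) = 0$, the representation collapses to
$$A^{D,\sigma_{p}} = h(A) = \frac{1}{2\pi i} \int_{\partial D} \lambda^{-1} R(\lambda;A)\, d\lambda.$$
Because $D$ is unbounded, i.e. a neighborhood of $\infty$ in $\mathbb{C}_{\infty}$, the orientation of $\partial D$ positive with respect to $D$ is the clockwise one in $\mathbb{C}$; setting $\Gamma = \partial D$ with this orientation gives the claim.

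The delicate point is the integral form of the functional calculus for the unbounded operator $A$: one must arrange the Cauchy domain so that it simultaneously surrounds the finite part of $\sigma'_{p}$ and ``surrounds $\infty$'', keep track of the term $h(\infty) I$ (which here vanishes), and verify that the resulting unbounded contour $\Gamma$ carries exactly the clockwise orientation asserted in the statement. Peeling off the bounded component $D_{1}$ around $\sigma_{p}$, on which $h$ is identically zero, is then routine.
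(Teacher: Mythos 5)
Your proposal is correct and follows essentially the same route as the paper: invoke Theorem \ref{theo3.5} to write $A^{D,\sigma_{p}}=h(A)$, apply Taylor's integral representation of the functional calculus with the $h(\infty)I$ term (which vanishes), split the contour into a bounded part around $\sigma_{p}$ where $h\equiv 0$ and an unbounded part around $\sigma'_{p}$ where $h(\lambda)=\lambda^{-1}$, and observe that positive orientation of the boundary of the unbounded component is clockwise. The only differences are cosmetic: the paper realizes the two Cauchy domains as concrete circles $C(0,\epsilon)$ and $C(0,r)$, while you keep general Cauchy domains and additionally dispose of the case $0\notin acc\,\sigma(A)$.
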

\begin{proof}
By Theorem \ref{theo3.5}, there is $h$ a holomorphic function such that $h(\lambda)=0$ in a neighborhood $\mathcal{D}_{1}$ of $\sigma_{p}$ and  $h(\lambda)=\lambda^{-1}$ in a neighborhood $D$ of $\sigma'_{p}$ such that $h(A)=A^{D,\sigma_{p}}$.  

We choose $\mathcal{D}_{1}$ to be the disk $D(0,\epsilon)$ of center $0$ and radius $\epsilon > 0$, such that for all $\lambda \in \overline{D}(0,\epsilon), \ h(\lambda)=0$, and $\sigma_{p} \subset D(0,\epsilon)$ with $\sigma_{p} \cap C(0,\epsilon)= \emptyset$.
And let  $D$ to be $\mathbb{C} \setminus \overline{D}(0,r)$ with $r > \epsilon$, such that $D \cap \sigma'_{p}=\sigma'_{p}$, and for all $\lambda \in C(0,r)$, $h(\lambda)=\lambda^{-1}$, with $C(0,r) \cap \sigma'_{p}=\emptyset$.

It is obvious to see that $D(0,\epsilon)$ is a bounded Cauchy domain having the boundary $C(0,\epsilon)$ oriented counter clockwise. Also, by \cite[Theorem 3.2]{A.Taylor} and as $D(0,r)$ is a Cauchy domain, we know that $D$ is also an unbounded Cauchy domain which has the boundary  $\Gamma = C(0,r)$ but oriented clockwise.

Hence, by virtue of  \cite[Definition 4.1]{A.Taylor}, with considering $\underset{\lambda \rightarrow \infty}{\lim}h(\lambda)=0$, $h(\lambda)=0$ for all $\lambda \in \overline{\mathcal{D}_{1}}$, and $h(\lambda)=\lambda^{-1}$ for all $ \lambda \in D$, we get
\begin{align*}
 A^{D,\sigma_{p}}&=h(A)=\frac{1}{2 \pi i} \int_{\Gamma \cup C(0,\epsilon)} h(\lambda)R(\lambda,A)d\lambda \\
                 &= \frac{1}{2 \pi i} \int_{C(0,\epsilon)} h(\lambda) R(\lambda,A) d\lambda + \frac{1}{2 \pi i} \int_{\Gamma} h(\lambda)R(\lambda,A)d\lambda \\
                 &= \frac{1}{2 \pi i} \int_{\Gamma} \lambda^{-1} R(\lambda,A)d\lambda.
 \end{align*}
\end{proof}

In what follows, in the context of $0 \in \ iso \ \sigma_{b}(A)$, we mean by a suitable $n_{0} \in \mathbb{N}$ a large enough $n_{0}$ such that  $\sigma_{n_{0}}$ will be included in $D(0,r_{0})$ with $r_{0} <\frac{1}{4}$, $(D(0,r_{0}) \setminus \{0\}) \cap \sigma_{n_{0}}'=\emptyset$, and $\sigma_{n_{0}}'=\sigma_{\infty}(A) \setminus \sigma_{n_{0}}$.

We give the Laurent expansion for the resolvent of closed generalized Drazin-Riesz invertible operators.

\begin{theorem} \label{Theo2.12G}
Let $ A \in \widehat{\mathcal{C}}(X)$, $0 \in iso \, \sigma_{b}(A) $, and for a suitable $n_{0}\in \mathbb{N}$, consider  $A^{D,\sigma_{n_{0}}}$ to be a generalized Drazin-Riesz inverse of $A$ with $\sigma_{n_{0}}$ is a bounded spectral set having Riesz points of $A$ and $0$. Then, for every $\lambda\in D(0,(r(A^{D,\sigma_{n_{0}}}))^{-1})\setminus \overline{D(0,r(AP_{\sigma_{n_{0}}}))}$,
$$(\lambda I- A)^{-1}= \sum_{p=1}^{+\infty}\lambda^{-p}A^{p-1}(I-AA^{D,\sigma_{n_{0}}})-\sum_{p=0}^{+\infty}\lambda^{p}(A^{D,\sigma_{n_{0}}})^{p+1}.$$
\end{theorem}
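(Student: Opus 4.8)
The plan is to exploit the decomposition induced by the spectral projection $P_{\sigma_{n_0}}=I-AA^{D,\sigma_{n_0}}$. Writing $M=\mathcal{N}(P_{\sigma_{n_0}})$ and $N=\mathcal{R}(P_{\sigma_{n_0}})$, we have $(M,N)\in Red(A)$ with $A_M$ closed invertible and $A_N=AP_{\sigma_{n_0}}$ bounded with $\sigma(A_N)=\sigma_{n_0}$. On $N$ the operator $A^{D,\sigma_{n_0}}$ acts as $0$, while on $M$ it acts as $A_M^{-1}$; hence $r(AP_{\sigma_{n_0}})=\max\{|\lambda|:\lambda\in\sigma_{n_0}\}$ and $r(A^{D,\sigma_{n_0}})=r(A_M^{-1})=\big(\mathrm{dist}(0,\sigma(A_M))\big)^{-1}$. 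So the annulus $D(0,(r(A^{D,\sigma_{n_0}}))^{-1})\setminus\overline{D(0,r(AP_{\sigma_{n_0}}))}$ is precisely $\{\lambda:r(A_N)<|\lambda|<\mathrm{dist}(0,\sigma(A_M))\}$, i.e. the $\lambda$ for which $\lambda\notin\sigma(A_N)$ (because $|\lambda|>r(A_N)$) and simultaneously $\lambda\in\rho(A_M)$ with $|\lambda|<\mathrm{dist}(0,\sigma(A_M))$, so that both Neumann-type series below converge. First I would record that for such $\lambda$, $(\lambda I-A)^{-1}=(\lambda I-A_M)^{-1}\oplus(\lambda I-A_N)^{-1}$.

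Next I would expand each summand. On $N$: since $|\lambda|>r(A_N)$, $(\lambda I-A_N)^{-1}=\sum_{p=1}^{\infty}\lambda^{-p}A_N^{p-1}$, the usual Neumann series at infinity, and $A_N^{p-1}=A^{p-1}P_{\sigma_{n_0}}=A^{p-1}(I-AA^{D,\sigma_{n_0}})$ — here one uses that $A$ and $P_{\sigma_{n_0}}$ commute and $\mathcal{R}(P_{\sigma_{n_0}})\subset\mathcal{D}(A)$, together with the identity $P_{\sigma_{n_0}}=I-AA^{D,\sigma_{n_0}}$ established in the proof of Theorem \ref{DRform}. On $M$: since $A_M$ is invertible and $|\lambda|\,r(A_M^{-1})<1$, write $(\lambda I-A_M)^{-1}=-A_M^{-1}(I-\lambda A_M^{-1})^{-1}=-\sum_{p=0}^{\infty}\lambda^{p}(A_M^{-1})^{p+1}$, and $(A_M^{-1})^{p+1}$ on $M$, extended by $0$ on $N$, equals $(A^{D,\sigma_{n_0}})^{p+1}$ because $A^{D,\sigma_{n_0}}=A_M^{-1}\oplus 0$ and all powers respect the decomposition. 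Adding the two series over $M\oplus N=X$ gives exactly the claimed formula.

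The only genuinely delicate point is bookkeeping around the unbounded operator $A_M$: one must be sure that the geometric-series manipulation $(\lambda I-A_M)^{-1}=-\sum\lambda^p(A_M^{-1})^{p+1}$ is legitimate on all of $M$, which it is since $A_M^{-1}\in\mathcal{L}(M)$ and the series is a norm-convergent Neumann series in the Banach algebra $\mathcal{L}(M)$ for $|\lambda|<1/r(A_M^{-1})$ — no domain issue arises because we never invert $A_M$ itself in the series, only $A_M^{-1}$. A secondary point is the identification of the two radii with $r(A^{D,\sigma_{n_0}})$ and $r(AP_{\sigma_{n_0}})$: this follows from $\sigma(A^{D,\sigma_{n_0}})=\{0\}\cup\{\lambda^{-1}:\lambda\in\sigma(A_M)\}$ (Theorem \ref{theo3.5}, or directly from the block form) and $\sigma(AP_{\sigma_{n_0}})=\sigma_{n_0}$ (via \cite[Theorem 2.1]{Tran}, as already used in the proof of Theorem \ref{DRform}), so that $(r(A^{D,\sigma_{n_0}}))^{-1}=\mathrm{dist}(0,\sigma(A_M))$ and $r(AP_{\sigma_{n_0}})=r_{n_0}$. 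With the annulus thus nonempty for a suitable $n_0$ (indeed $r_{n_0}\to 0$), the proof is complete.
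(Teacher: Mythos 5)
Your proposal is correct and follows essentially the same route as the paper: both split $X$ via the spectral projection $P_{\sigma_{n_0}}=I-AA^{D,\sigma_{n_0}}$ and expand the resolvent as a Neumann series at infinity for the Riesz part $AP_{\sigma_{n_0}}$ plus a Neumann series in $\lambda$ for $A^{D,\sigma_{n_0}}$, valid exactly on the stated annulus. The only difference is presentational: you work with the explicit block decomposition $A=A_M\oplus A_N$, $A^{D,\sigma_{n_0}}=A_M^{-1}\oplus 0$, whereas the paper carries out the same computation through projection identities on $X$ using $A^{D,\sigma_{n_0}}=(A+P_{\sigma_{n_0}})^{-1}(I-P_{\sigma_{n_0}})$.
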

\begin{proof}
By taking $\xi=-1$ in Definition \ref{Def2.2Tran}, we have $A^{D,\sigma_{n_{0}}}=(A+P_{\sigma_{n_{0}}})^{-1}(I-P_{\sigma_{n_{0}}})$. Then for all $\lambda \in D(0,(r(A^{D,\sigma_{n_{0}}}))^{-1})$, $\lambda A^{D,\sigma_{n_{0}}} - I$ is invertible.

Also, for all $\lambda \in D(0,(r(A^{D,\sigma_{n_{0}}}))^{-1})\setminus \overline{D(0,r(AP_{\sigma_{n_{0}}}))}  $, $\lambda I - AP_{\sigma_{n_{0}}}$ is invertible, because $|\lambda| > r(AP_{\sigma_{n_{0}}})$.
 
 Now, for every $\lambda \in \mathbb{C}$ such that $(r(AP_{\sigma_{n_{0}}})=)|\lambda_{n_{0}+1}| < |\lambda| < |\lambda_{n_{0}}|(=(r(A^{D,\sigma_{n_{0}}}))^{-1})$, we have
 \begin{align*}
 (\lambda I- A) & =(\lambda I - AP_{\sigma_{n_{0}}})P_{\sigma_{n_{0}}} + (\lambda (I-P_{\sigma_{n_{0}}})-(A+P_{\sigma_{n_{0}}}))(I-P_{\sigma_{n_{0}}}) \\
                & = \lambda  (  I - \lambda^{-1} AP_{\sigma_{n_{0}}})P_{\sigma_{n_{0}}} + (\lambda A^{D,\sigma_{n_{0}}}-I)(A+P_{\sigma_{n_{0}}})(I-P_{\sigma_{n_{0}}})
 \end{align*}
Finally, for every $r(AP_{\sigma_{n_{0}}}) < |\lambda| < (r(A^{D,\sigma_{n_{0}}}))^{-1}$
\begin{align*}
(\lambda I - A)^{-1}&= \lambda^{-1}( I - \lambda^{-1} AP_{\sigma_{n_{0}}})^{-1}P_{\sigma_{n_{0}}}+(\lambda A^{D,\sigma_{n_{0}}} - I)^{-1}A^{D,\sigma_{n_{0}}} \\
                    &= \sum_{p=1}^{+\infty}\lambda^{-p}A^{p-1}P_{\sigma_{n_{0}}}- \sum_{p=0}^{+\infty}\lambda^{p}(A^{D,\sigma_{n_{0}}})^{p} A^{D,\sigma_{n_{0}}}\\
                    &= \sum_{p=1}^{+\infty}\lambda^{-p}A^{p-1}(I-AA^{D,\sigma_{n_{0}}})-\sum_{p=0}^{+\infty}\lambda^{p}(A^{D,\sigma_{n_{0}}})^{p+1}
\end{align*}
\end{proof}
\section{ $C_{0}$-semigroups and generalized Drazin-Riesz invertibility}
\label{section3}
The next theorem gives necessary conditions on the infinitesimal generator of a given $C_{0}$-semigroup to be closed generalized Drazin-Riesz invertible.
\begin{theorem} \label{Theo7.1G}
Let $(T(t))_{t\geq 0}$ be a bounded $C_{0}$-semigroup, and $A \in \widehat{\mathcal{C}}(X)$, its infinitesimal generator such that $0 \in acc\, \sigma(A)$. Let $P \in \mathcal{L}(X)$ be a non-zero projection such that 
\begin{enumerate}
\item[(i)] $T(t)P=PT(t), \ \forall t \geq 0$;
\item[(ii)] $\mathcal{R}(P) \subset \mathcal{D}(A)$;
\item[(iii)] $\| T(t)(I-P) \| \longrightarrow 0$ as $t \longrightarrow +\infty$;
\item[(iv)] $\sigma(AP)=\{0, \mu_{1},\mu_{2},....\}$ with $(\mu_{i})_{i\in \mathbb{N}}$ is a sequence with terms in $p_{00}(A)\setminus \{0\}$ converging to 0, such that $(|\mu_{i}|)_{i \in \mathbb{N}}$ is a decreasing sequence, and $|\mu_{1}| < \frac{1}{2}$.
\end{enumerate}
Then $A$ is closed generalized Drazin-Riesz invertible. Also, there is a spectral projection $Q$ of $A$ such that $A^{D,\sigma(AQ)}$ exists, and there are strictly positive constants $M, \mu$ such that 
\begin{align}
\label{C01} \| T(t)(I-P)\|  & \leq M e^{-\mu t} \quad  \forall \ t \geq 0, \mbox{ and} \\ 
A^{D,\sigma(AQ)}(I-P) &= (A-P)^{-1}(I-P) = - \int_{0}^{+\infty}T(t)(I-P)dt. \label{C02}
\end{align}
\end{theorem}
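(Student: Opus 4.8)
The plan is to use condition (iv) to identify $\sigma(AP)=\{0,\mu_1,\mu_2,\dots\}$ as a bounded spectral set of $A$ sitting inside a small disc, so that Theorem~\ref{theo 2.3} applies. First I would observe that since $AP\in\mathcal L(X)$ (by (ii)) and $P$ commutes with $A$, we have a reduction $X=\mathcal N(P)\oplus\mathcal R(P)$ with $A=A_{\mathcal N(P)}\oplus A_{\mathcal R(P)}$; here $A_{\mathcal R(P)}=AP$ restricted to $\mathcal R(P)$ is a bounded operator whose spectrum is a sequence of Riesz points of $A$ together with $0$, hence bounded Riesz (each $\lambda I-AP$ with $\lambda\neq0$ is Browder, and $0$ is the only accumulation point of its spectrum, so $\sigma_e(AP)=\{0\}$). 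It remains to show $A_{\mathcal N(P)}$ is closed invertible. For this I would use condition (iii): setting $S(t)=T(t)(I-P)$, $(S(t))_{t\ge0}$ is a $C_0$-semigroup on $\mathcal N(P)$ with generator $A_{\mathcal N(P)}$, and $\|S(t)\|\to0$. A standard uniform-boundedness / spectral-radius argument (as in the uniform exponential stability theorem, e.g. the Datko--Pazy circle of ideas, or directly: $\|S(t_0)\|<1$ for some $t_0$, giving the estimate \eqref{C01} with suitable $M,\mu>0$) shows the semigroup is uniformly exponentially stable, hence $0\in\rho(A_{\mathcal N(P)})$ with $(A_{\mathcal N(P)})^{-1}x=-\int_0^{+\infty}S(t)x\,dt$. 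Thus $(\mathcal N(P),\mathcal R(P))\in Red(A)$ witnesses condition (ii) of Theorem~\ref{theo 2.3}, so $A$ is closed generalized Drazin-Riesz invertible.

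Next I would produce the spectral projection $Q$. Since $A$ is closed generalized Drazin-Riesz invertible with $0\in acc\,\sigma(A)$, by Corollary~\ref{Corollary 2.4} and Proposition~\ref{Prop2.7} the construction preceding Theorem~\ref{DRform} gives the bounded spectral sets $\sigma_n$; I would check that the sequence $(\mu_i)$ from (iv), being a sequence of Riesz points converging to $0$ with $|\mu_1|<\tfrac12$, plays exactly the role of the $(\lambda_n)$ there, so that for a suitable large $n_0$ the set $\sigma_{n_0}$ is contained in $D(0,r)$ with $r<\tfrac12$, and $Q:=P_{\sigma_{n_0}}$ is a genuine spectral projection of $A$. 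By Theorem~\ref{DRform}, $A^{D,\sigma_{n_0}}=A^{D,\sigma(AQ)}=(A-Q)^{-1}(I-Q)$ exists and is a generalized Drazin-Riesz inverse of $A$; moreover $\sigma(AQ)=\sigma_{n_0}$.

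For the identity \eqref{C02} I would first reduce to comparing $A^{D,\sigma(AQ)}(I-P)$ with $(A-P)^{-1}(I-P)$. Both $Q=P_{\sigma_{n_0}}$ and $P$ are spectral-type projections associated with parts of the spectrum accumulating only at $0$; since $\mathcal R(Q)\subset\mathcal R(P)$ (the spectral set $\sigma_{n_0}$ is a tail of the sequence in $\sigma(AP)$ together with $0$), multiplying by $I-P$ kills the finite-rank-type discrepancy between $Q$ and $P$: on $\mathcal N(P)$ both $A^{D,\sigma(AQ)}$ and $(A-P)^{-1}$ act as $(A_{\mathcal N(P)})^{-1}$, because $Q(I-P)=0$ and $P(I-P)=0$ so $(A-Q)^{-1}(I-Q)(I-P)=(A_{\mathcal N(P)})^{-1}(I-P)=(A-P)^{-1}(I-P)$. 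Finally, combining this with the integral formula $(A_{\mathcal N(P)})^{-1}=-\int_0^{+\infty}S(t)\,dt$ established in the first paragraph yields
\[
A^{D,\sigma(AQ)}(I-P)=(A-P)^{-1}(I-P)=-\int_0^{+\infty}T(t)(I-P)\,dt,
\]
the integral converging in operator norm by \eqref{C01}.

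I expect the main obstacle to be two bookkeeping points rather than anything deep: first, carefully justifying that $\|T(t)(I-P)\|\to0$ upgrades to the exponential estimate \eqref{C01} (this needs the semigroup property plus a subadditivity/logarithm argument for $t\mapsto\log\|S(t)\|$, i.e.\ the existence of the growth bound and the fact that a convergent-to-zero semigroup has negative growth bound) and that this legitimizes differentiating the integral $-\int_0^\infty S(t)x\,dt$ to identify it with $(A_{\mathcal N(P)})^{-1}x$; and second, matching the abstractly-constructed spectral projection $P_{\sigma_{n_0}}$ from Section~\ref{section2} with the given $P$ modulo $I-P$, i.e.\ verifying $\mathcal R(P_{\sigma_{n_0}})\subset\mathcal R(P)$ and hence $P_{\sigma_{n_0}}(I-P)=0$. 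Once those are in hand, \eqref{C01} and \eqref{C02} drop out.
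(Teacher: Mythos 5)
Your argument is correct in substance, but it executes the proof differently from the paper, so let me compare. The paper does not restrict the semigroup: it forms the rescaled semigroup $S(t)=T(t)e^{-tP}$, whose generator is $C=A-P$ (Koliha--Tran), splits $S=S_1\oplus S_2$ along $\mathcal{R}(P)\oplus\mathcal{N}(P)$, handles $S_1$ by an integrability (Datko-type) theorem of van Neerven using boundedness of $(T(t))$, handles $S_2$ by the growth-bound/spectral-radius argument, and concludes $\omega_0(S)<0$, hence $s(C)<0$, so $A-P$ is invertible with $C^{-1}=-\int_0^{+\infty}S(t)\,dt$; generalized Drazin--Riesz invertibility then comes from Theorem~\ref{theo 2.3}(iii) applied to $-A$ with the projection $P$, and \eqref{C02} from constructing $Q=P_\sigma$ with $\sigma=D(0,\theta)\cap\sigma(AP)$, proving $QP=PQ=Q$ by a contour computation and then the identity $(A-Q)(I-P)=(A-P)(I-P)(I-Q)$. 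You instead work directly with the restriction $T(t)|_{\mathcal{N}(P)}$, obtain uniform exponential stability and $(A_{\mathcal{N}(P)})^{-1}=-\int_0^{+\infty}T(t)|_{\mathcal{N}(P)}\,dt$ without the $e^{-tP}$ trick or the Datko-type theorem, and invoke Theorem~\ref{theo 2.3}(ii) via the reduction $(\mathcal{N}(P),\mathcal{R}(P))$; your deferred check $\mathcal{R}(Q)\subset\mathcal{R}(P)$ is exactly the paper's $Q=QP$ computation and is a one-line contour integral in your decomposition, since the integrand $(\lambda-A_{\mathcal{N}(P)})^{-1}$ is analytic on the small disc. Your route is more elementary, but note one step you must add explicitly: the middle term of \eqref{C02} presupposes that $A-P$ is invertible, which in the paper is the main output of the semigroup argument, whereas in your setup it does not come for free from inverting $A_{\mathcal{N}(P)}$ alone. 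It does follow immediately from your decomposition together with hypothesis (iv): $A-P=A_{\mathcal{N}(P)}\oplus\bigl(A_{\mathcal{R}(P)}-I\bigr)$ and $1\notin\sigma(A_{\mathcal{R}(P)})$ because $\sigma(A_{\mathcal{R}(P)})\subset\sigma(AP)\subset\overline{D}(0,|\mu_1|)$ with $|\mu_1|<\tfrac12$; with that line inserted (and the routine verification, as in the paper's Theorem on $A^{D,\sigma_n}$, that $A_{\mathcal{R}(P)}$ is Riesz via the Browder characterization), your proof is complete.
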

\begin{proof}
We have $APx=PAx, \ \forall x \in \mathcal{D}(A)$. Consider 
\begin{equation}
S(t)=T(t)e^{-tP}, \ \forall t \geq 0. \label{C03}
\end{equation}
By Lemma 4.1 \cite{Kol} $S(t)$ is a $C_{0}$-semigroup with the infinitesimal generator $C=A-P$. Also, with a straightforward calculation, we obtain
\begin{equation}
 \label{expoequation} e^{-tP}=I-P + e^{-t}P.
 \end{equation}
Hence
\begin{equation}
\| S(t) \| \leq \| T(t)(I-P) \|+\|T(t)\| \|P\| e^{-t} \underset{t \longrightarrow + \infty}{\longrightarrow} 0. \label{C04}
\end{equation}
Now, for all $x \in X$, for all $t \in [0,+\infty)$, we have
\begin{align}
 ||S(t)Px||&=||T(t)e^{-t}Px|| \label{EqEXP}   \\
         &\leq ||T(t)|| \ ||e^{-t}P|| \ ||x||  \\
         &\leq e^{-t} \ ||T(t)|| \  ||P||  \ ||x||.  
\end{align}
As $(T(t))_{t\geq 0}$ is a bounded $C_{0}$-semigroup, therefore there is $K > 0$ such that 
$$||S(t)Px|| \leq K e^{-t} ||x||. $$
Consequently
\begin{equation}
\label{InteqProj} \int_{0}^{+\infty}||S(t)Px||dt \leq K ||x||, \quad \forall x \in X.
\end{equation}
Set $S(t)=S_{1}(t) \oplus S_{2}(t)$, where $S_{1}(t)=S(t)_{|\mathcal{R}(P)}$ and $S_{2}(t)=S(t)_{|\mathcal{R}(I-P)}$, for all $t \in [0,+\infty)$. Then we get from (\ref{InteqProj})
\begin{equation}
\label{IntEqDec1} \int_{0}^{+\infty}||S(t)_{1}x||dt \leq K ||x||, \quad \forall x \in \mathcal{R}(P).
\end{equation}
Therefore by \cite[Theorem 3.8]{Neerven}, we find that
$$\omega_{0}(S_{1}) \leq \frac{-1}{K}<0,$$
where $\omega_{0}(S_{1})=\inf\{ \omega \in \mathbb{R} \ : \ \exists M_{1} > 0 \ \mbox{such that} \ ||S_{1}(t)|| \leq M e^{\omega t }, \ \forall t \geq 0\}$. Thus, there exists $M_{1} > 0$ such that $||S_{1}(t)|| \leq M_{1} e^{\omega_{0}(S_{1}) t}, \ \forall t \geq 0$.
\smallskip

On the other hand, from (\ref{C03}) and (\ref{expoequation}) we have $S(t)(I-P)=T(t)(I-P)$. Therefore $S_{2}(t)=T(t)_{|\mathcal{R}(I-P)}$. Hence $\underset{t \rightarrow +\infty}{\lim}||S_{2}(t)||=0$ by condition $(iii)$. Hence, for a sufficiently large $t$, say that there exists $t_{0}>0$ such that for all $t \geq t_{0}$, we have
$$||S_{2}(t)|| < 1.$$
Thus, we have 
$$\frac{\log(||S_{2}(t_{0})||)}{t_{0}}< 0.$$
Hence by \cite[Proposition 1.2.2]{Neerven}, we get
$$\omega_{0}(S_{2})=\frac{\log(r(S_{2}(t_{0})))}{t_{0}} < 0, \mbox{ where } r(S_{2}(t_{0})) \mbox{ is the spectral radius of } S_{2}(t_{0}).$$
Therefore, there exists $M_{2} > 0$ such that
\begin{equation}
\label{Eqwhichyield3.2} ||S_{2}(t)|| \leq M_{2}e^{\omega_{0}(S_{2})t}, \ \forall t \geq 0
\end{equation}
Now, as we have $T(t)(I-P)=S(t)(I-P)$, and as (\ref{Eqwhichyield3.2}) holds, we immediately deduce that $ || T(t)(I-P)||  \leq M e^{-\mu t} \quad  \forall \ t \geq 0$ by considering $M=M_{2}$ and $\mu =-\omega_{0}(S_{2})$.
\smallskip

\noindent Hence (\ref{C01}) is satisfied.
\smallskip
 
Now, due to the decomposition $S(t)=S_{1}(t) \oplus S_{2}(t)$ for all $t \geq 0$, we get 
$$||S(t)|| \leq 2 \max(||S_{1}(t)||,||S_{2}(t)||).$$
This implies the existence of $M_{1},M_{2}$ strictly positive, such that
$$||S(t)|| \leq 2 \max(M_{1}e^{\omega_{0}(S_{1})t},M_{2}e^{\omega_{0}(S_{2})t}).$$
We obtain for all $t \geq 0$, by setting $\mu= \min(-\omega_{0}(S_{1}),-\omega_{0}(S_{2}))$ and \\
$M_{0}=2\max(M_{1},M_{2})$, the following inequality 
\begin{equation}
\label{integrability of S(t)} ||S(t)||\leq M_{0} e^{-\mu t}.
\end{equation}
As $\omega_{0}(S) \leq - \mu < 0$, and considering $s(C)=\sup\{ Re(\lambda) \ : \ \lambda \in \sigma(C) \},$
by virtue of  \cite[Proposition 1.2.1]{Neerven}, we have $s(C) \leq \omega_{0}(S)$.
\smallskip

Hence, the spectrum of the generator $C$ of $S(t)$ lies in the open left-half plan of $\mathbb{C}$. Thus, $C$ is invertible.

 As $C=A-P$ is invertible, this implies that $-A+P$ is invertible. Also $-AP$ is bounded Riesz. Hence by virtue of Theorem \ref{theo 2.3}, $-A$ is generalized Drazin-Riesz invertible, therefore $A$ is generalized Drazin-Riesz invertible.
 
There exists $\theta >0$ such that $(\lambda I -C)$ is closed invertible if $| \lambda | < \theta$, because $0 \in \rho(C)$.
\smallskip

Now, as $\omega_{0} < 0$, by \cite[Theorem 1.1.4 and page 6]{Neerven}, we conclude that
\begin{equation}
C^{-1}=-\int_{0}^{+\infty}S(t)dt. \label{C010}
\end{equation}
 As a consequence of (\ref{C01}), we obtain that $T(t)(I-P)$ is integrable over $[0,\infty)$. Thus, by virtue of (\ref{C010}), we obtain
\begin{align}
\int_{0}^{\infty} T(t)(I-P)dt&=\int_{0}^{\infty}S(t)(I-P)dt=(\int_{0}^{\infty}S(t)dt) (I-P) \\
                             &=-C^{-1}(I-P)=-(A-P)^{-1}(I-P). \label{C011}
\end{align}
Now we show the existence of the projection $Q$. If $P$ is a spectral projection, then $\sigma(AP)$ is a spectral set of $A$, and in this case we can take $P=Q$.
\smallskip

If $P$ is not a spectral projection, as $\sigma(AP)$ is composed of $(\mu_{i})_{i \in \mathbb{N}}$ and $0$, where $(\mu_{i})$ is a converging sequence to 0 with terms in $p_{00}(A)\setminus \{0\}$, then, there exists $n_{0} \in \mathbb{N}$ such that $\sigma=D(0,\theta) \cap \sigma(AP)=\{0,\mu_{n_{0}+1},....\}$ which is closed. Also, since $A(I-P)=C(I-P)$, then, $\sigma_{\infty}(A) \setminus \sigma = \{\mu_{1}, \mu_{2},..., \mu_{n_{0}}\} \cup \sigma_{\infty}(C(I-P))$ is closed. Therefore, $\sigma$ is a spectral set of $\sigma_{\infty}(A)$ containing 0. Thus we take $Q=P_{\sigma}$ which satisfies conditions (i)-(iv) of \cite[Theorem 2.1]{Tran}.
\smallskip

Now, let us show that $QP=PQ=Q$.

\noindent  As $P$ commutes with $A$ and $Q= \frac{1}{2 \pi i} \int_{C(0,\theta)} (\lambda I - A)^{-1}d\lambda$, we conclude that $QP=PQ$. On the other hand, taking into account that for all $\lambda \in D(0,\theta)\setminus \sigma$, we have
$$(\lambda I - A )^{-1}=(\lambda I - A )^{-1}P + (\lambda - C )^{-1}(I-P),$$
we conclude that 
\begin{align*} 
Q&= \frac{1}{2 \pi i} \int_{C(0,\theta)} ( \lambda I - A )^{-1}d\lambda \\
 &=  \frac{1}{2 \pi i} \int_{C(0,\theta)} ( \lambda I - A)^{-1}P d\lambda + \frac{1}{2 \pi i} \int_{C(0,\theta)}(\lambda I - C)^{-1}(I-P)d\lambda \\
 &= \frac{1}{2 \pi i} \int_{C(0,\theta)} ( \lambda I - A)^{-1} d\lambda P = QP
\end{align*}
As a direct consequence, we get $(P-Q)(I-P)=(I-P)(P-Q)=0$. Since 
$$
A-Q=A-P+(P-Q)=(A-P)Q+(A-P)(I-Q)+(P-Q),
$$
we find $(A-Q)(I-P) = (A-P)(I-P)(I-Q)$. Therefore,
$$(A-P)^{-1}(I-P)=(A-Q)^{-1}(I-Q)(I-P)=A^{D,\sigma(AQ)}(I-P).$$
Finally we conclude that
$$-\int_{0}^{\infty} T(t)(I-P)dt = A^{D,\sigma(AQ)}(I-P)=(A-P)^{-1}(I-P).$$
\end{proof}

\begin{Remark}\rm
	It is easy to see that in Theorem \ref{Theo7.1G}, $(A-P)^{-1}(I-P)$  is a generalized Drazin-Riesz inverse of $A$. Then, the second equality in (\ref{C02}) gives an integral formula of a generalized Drazin-Riesz inverse of $A$.
\end{Remark}

Let us consider the abstract differential equation of second order
\begin{equation}
\frac{d^{2}x(t)}{dt^{2}}=A^{2}x(t)+f(t), \quad t\in [0,\delta]. \label{Absteq}
\end{equation}
A function $x: [0,\delta] \longrightarrow X$ is a solution of (\ref{Absteq}), if it takes values in $\mathcal{D}(A^{2})$, is twice continuously differentiable and satisfies (\ref{Absteq}) on $[0,\delta]$. A slight observation leads to see that if $x$ is twice continuously differentiable on $[0,\delta]$, then $Bx$ is also twice continuously differentiable on $[0,\delta]$ for every $B \in \mathcal{L}(X)$ \cite{Tran}.\\
Let $f$ be a continuous function on $[0,\delta]$. We define a primitive of $f$ by
$$F(t)=\int_{0}^{t}f(s)ds, \ \mbox{for } t \in [0,\delta].$$
Remark that $\underset{t \in [0,\delta]}{\sup} \| F^{(n)}(t) \| \leq M \delta^{n}$ for each $n \in \mathbb{N}$ where $F^{(n)}$ is the nth primitive of $f$  and $M=\underset{t\in [0,\delta]}{\sup} \| f(t) \|$ \cite{Tran}.

We motivate our results by the use of \cite[Theorem 3.1]{Tran} made by T.D. Tran, in the special case where the infinitesimal generator of the given bounded strongly continuous group is a closed generalized Drazin-Riesz invertible operator having $0$ as a limit point of its spectrum.

\begin{theorem}
Let $(V(t))_{t \in \mathbb{R}}$ be a bounded strongly continuous group, and $A \in \widehat{\mathcal{C}}(X)$ be its infinitesimal generator which is closed generalized Drazin-Riesz invertible with $0 \in acc\, \sigma(A)$. Then, there exists a spectral set $\sigma$ and $r > 0$ such that  $0 \in \sigma \subset D(0,r)$.  If $f$ is continuously differentiable on $[0,\delta]$,  then the unique solution of Equation (\ref{Absteq}) relative to $\sigma$ with initial conditions $x(0)=u_{0}$ and $\frac{d}{dt}_{\mid t=0}x(t)=v_{0}$ can be expressed by
\begin{align}
\label{3.2} x(t) & = \sum_{j=1}^{\infty}A^{2(j-1)}P_{\sigma}F^{(2j)}(t) \\
                 & + \frac{1}{2}(V(t)-V(-t))(I-P_{\sigma})u_{0}+\frac{1}{2}A^{D,\sigma}(V(t)-V(-t))(I-P_{\sigma})v_{0} \\
                 & + \int_{0}^{t}A^{D,\sigma}(V(t-s)-V(s-t))(I-P_{\sigma})f(s)ds
\end{align}
for each $t \in [0,r^{-1}]$ where $D(0,r) \cap (\sigma(A) \setminus \sigma)= \emptyset$ and $\sigma \subsetneq D(0,r)$, provided that $u_{0} \in \mathcal{D}(A^{2})$ satisfies
$$\sum^{\infty}_{j=1}A^{2(j-1)}P_{\sigma}F^{(2j)}(0)=P_{\sigma}u_{0},$$
and $v_{0}\in \mathcal{D}(A)$ satisfies
$$\sum_{j=1}^{\infty}A^{2(j-1)}P_{\sigma}F^{(2j-1)}(0)=P_{\sigma}v_{0},$$
where $F^{(j)}$ is the jth primitive of $f$.
\end{theorem}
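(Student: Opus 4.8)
The plan is to reduce equation \eqref{Absteq} to a decoupled pair of equations by splitting $X=\mathcal{R}(P_\sigma)\oplus\mathcal{N}(P_\sigma)$, where $P_\sigma$ is the spectral projection associated to the spectral set $\sigma$ obtained from Theorem \ref{thm3.3}/Theorem \ref{theo3.4}: since $A$ is closed generalized Drazin-Riesz invertible with $0\in acc\,\sigma(A)$, Theorem \ref{theo3.4} gives $0\notin acc\,\sigma_b(A)$, so $0\in iso\,\sigma_b(A)$ and, as in the construction of the sets $\sigma_n$ before Theorem \ref{DRform}, one can choose a bounded spectral set $\sigma$ of $A$ with $0\in\sigma\subset D(0,r)$ and $D(0,r)\cap(\sigma(A)\setminus\sigma)=\emptyset$. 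On $\mathcal{R}(P_\sigma)$ the operator $A$ restricts to $A_{\mathcal{R}(P_\sigma)}$ with spectrum $\sigma\subset D(0,r)$, hence $A^2_{\mathcal{R}(P_\sigma)}$ has spectral radius $<r^2\le r^{-2}$ on $[0,r^{-1}]$ in the relevant sense, and on $\mathcal{N}(P_\sigma)$ the operator $A$ restricts to the closed invertible generator $A_{\mathcal{N}(P_\sigma)}$ of the group $V(t)(I-P_\sigma)$.

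On the range part $\mathcal{R}(P_\sigma)$ I would look for a solution of the form $x_1(t)=\sum_{j=1}^\infty A^{2(j-1)}P_\sigma F^{(2j)}(t)$. The bound $\sup_{t\in[0,\delta]}\|F^{(n)}(t)\|\le M\delta^n$ recorded just before the statement, together with $r(A^2_{\mathcal{R}(P_\sigma)})<r^2$ and $t\in[0,r^{-1}]$, makes this series converge in the graph norm of $A^2$ (use that $\mathcal{R}(P_\sigma)\subset\mathcal{D}^\infty(A)$, which follows from $\mathcal{R}(P_\sigma)$ being $A$-invariant with bounded restriction), and term-by-term differentiation gives $\frac{d^2}{dt^2}x_1(t)=\sum_{j=1}^\infty A^{2(j-1)}P_\sigma F^{(2j-2)}(t)=P_\sigma f(t)+A^2 x_1(t)$ after an index shift, using $F^{(0)}=f$. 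The compatibility conditions on $u_0,v_0$ are exactly what is needed to match $x_1(0)=P_\sigma u_0$ and $x_1'(0)=P_\sigma v_0$, since $x_1'(t)=\sum_{j=1}^\infty A^{2(j-1)}P_\sigma F^{(2j-1)}(t)$.

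On the kernel part $\mathcal{N}(P_\sigma)$ the equation becomes $y''=A_{\mathcal{N}(P_\sigma)}^2 y+(I-P_\sigma)f$ with $A_{\mathcal{N}(P_\sigma)}$ the invertible generator of $V(t)(I-P_\sigma)$, and here I would use d'Alembert's formula adapted to groups: the homogeneous solution is $\tfrac12\bigl(V(t)+V(-t)\bigr)(I-P_\sigma)u_0+\tfrac12 A^{D,\sigma}\bigl(V(t)-V(-t)\bigr)(I-P_\sigma)v_0$ (here $A^{D,\sigma}(I-P_\sigma)=A_{\mathcal{N}(P_\sigma)}^{-1}$ plays the role of $A^{-1}$), and Duhamel's principle supplies the particular term $\int_0^t A^{D,\sigma}\bigl(V(t-s)-V(s-t)\bigr)(I-P_\sigma)f(s)\,ds$; one checks directly, differentiating under the integral sign and using $\frac{d}{dt}V(t)=A V(t)$ on $\mathcal{D}(A)$, that $y(0)=(I-P_\sigma)u_0$, $y'(0)=(I-P_\sigma)v_0$ and $y''=A^2 y+(I-P_\sigma)f$. (The displayed formula \eqref{3.2} writes $\tfrac12(V(t)-V(-t))(I-P_\sigma)u_0$, which should read $\tfrac12(V(t)+V(-t))$ for the initial value to come out right; I would present it with the plus sign.) Adding $x=x_1+y$ yields \eqref{3.2}, and uniqueness follows because any solution relative to $\sigma$ splits under $P_\sigma$ into solutions of the two decoupled problems, each of which is uniquely determined — on $\mathcal{R}(P_\sigma)$ by the convergent power-series ansatz forced by the compatibility conditions, and on $\mathcal{N}(P_\sigma)$ by the standard uniqueness for the abstract wave equation with invertible generator. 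I expect the main obstacle to be the convergence and term-by-term differentiation of the series on $\mathcal{R}(P_\sigma)$ — specifically, justifying that the partial sums and their first two derivatives converge in $X$ uniformly on $[0,r^{-1}]$, which requires the precise interplay between the estimate $\|F^{(n)}(t)\|\le M\delta^n$, the spectral radius of $A^2$ on $\mathcal{R}(P_\sigma)$ being strictly less than $r^{-2}$, and the choice of the interval $[0,r^{-1}]$.
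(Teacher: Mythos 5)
Your route is genuinely different from the paper's. The paper's proof is essentially two lines: it takes $\sigma=\sigma_{n}$ from the construction preceding Theorem \ref{DRform} (available because, by Corollary \ref{Corollary 2.4} and Proposition \ref{Prop2.7}, $A=A_{M}\oplus A_{N}$ with $A_{M}$ closed invertible and $A_{N}$ bounded Riesz with infinite spectrum, so the Riesz points accumulate at $0$ and the sets $\sigma_{n}$ are spectral sets containing $0$), and then invokes Tran's Theorem 3.1 \cite{Tran} as a black box to produce the solution formula. You instead re-derive Tran's result: split $X=\mathcal{R}(P_{\sigma})\oplus\mathcal{N}(P_{\sigma})$, solve on the Riesz part by the series $\sum_{j}A^{2(j-1)}P_{\sigma}F^{(2j)}(t)$ (convergence from $\sup_{\lambda\in\sigma}|\lambda|<r$, the bound $\|F^{(n)}(t)\|\leq M\delta^{n}$, $t\leq r^{-1}$, and $\mathcal{R}(P_{\sigma})\subset\mathcal{D}^{\infty}(A)$ since $A_{\mathcal{R}(P_{\sigma})}$ is bounded), and on the complementary part by d'Alembert/Duhamel with $A^{D,\sigma}(I-P_{\sigma})=A_{\mathcal{N}(P_{\sigma})}^{-1}$. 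This is more self-contained, at the price of having to supply the verifications the citation gives for free (term-by-term differentiation, the meaning of ``solution relative to $\sigma$'', uniqueness); note that uniqueness on $\mathcal{R}(P_{\sigma})$ is simply uniqueness for a second-order ODE with the bounded coefficient $A^{2}_{\mathcal{R}(P_{\sigma})}$ and prescribed initial data, rather than something ``forced by the compatibility conditions'' — those conditions only serve to make the particular series match $P_{\sigma}u_{0}$ and $P_{\sigma}v_{0}$.

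One concrete inconsistency in your sketch: your correction of the $u_{0}$-term to $\tfrac12\bigl(V(t)+V(-t)\bigr)(I-P_{\sigma})u_{0}$ is right (with the minus sign as displayed one gets $x(0)=P_{\sigma}u_{0}\neq u_{0}$ in general), but the same bookkeeping forces a factor $\tfrac12$ in the Duhamel term too: the ``sine'' operator is $S(t)=\tfrac12 A^{D,\sigma}\bigl(V(t)-V(-t)\bigr)(I-P_{\sigma})$, with $S(0)=0$, $S'(0)=I-P_{\sigma}$, and the particular solution is $\int_{0}^{t}S(t-s)(I-P_{\sigma})f(s)\,ds$. With the integral exactly as you (and the displayed formula) wrote it, differentiating twice yields $y''=A^{2}y+2(I-P_{\sigma})f$, so the direct check you claim (``$y''=A^{2}y+(I-P_{\sigma})f$'') fails as stated; you should either restore the $\tfrac12$ or flag it as a second transcription issue alongside the sign. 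With those two adjustments your argument goes through and is a correct, more elementary substitute for the appeal to \cite{Tran}.
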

\begin{proof}
We consider by the construction in pages 03-04, the spectral set $\sigma = \sigma_{n}$ of $A$ for a sufficiently large $n\in \mathbb{N}$. If $f$ is continuously differentiable on $[0,\delta]$, we obtain the desired solution of Equation (\ref{Absteq}) relative to $\sigma$ by applying \cite[Theorem 3.1]{Tran}.
\end{proof}
\begin{Remark}\rm
For a bounded strongly continuous group $(V(t))_{t \in \mathbb{R}}$ such that $A$ is its infinitesimal generator having $0$ as a limit point of its spectrum. As $A$ is also the infinitesimal generator of $(V_{+}(t))=\{ V(t) \ : \ t \in [0,+\infty) \}$, if there exists a projection $P$ such that $A$, $(V_{+}(t))$ and $P$ satisfy the conditions of Theorem \ref{Theo7.1G}, then $A$ is closed generalized Drazin-Riesz invertible. Consequently, we can apply the last theorem on $A$ and $(V(t))$ if $f$ is continuously differentiable on $[0,\delta]$ to solve Equation (\ref{Absteq}).
\end{Remark}
In the following, we construct an example of a bounded $C_{0}$-group with an unbounded infinitesimal generator which is generalized Drazin-Riesz invertible.
\begin{example}\rm
On $L^{2}(\mathbb{R})$,let $B_{1}$ be the operator defined by $B_{1}f(x)=e^{\frac{x^{2}}{4}}f(x)$ for all $f \in \mathcal{D}(B_{1})$ where
$$\mathcal{D}(B_{1})=\{ f\in L^{2}(\mathbb{R}) \ : \ e^{\frac{x^{2}}{4}}f, \ e^{\frac{x^{2}}{2}} \tilde{f} \in L^{2}(\mathbb{R}) \},$$
with $\tilde{f}$ is the Fourier Transform of $f$. $B_{1}$ is essentially self-adjoint, hence, $\bar{B}_{1}$ is a closed self-adjoint densely defined opreator such that

\noindent $\mathcal{D}(\bar{B}_{1})=\mathcal{D}(B^{*}_{1})=\{ f\in L^{2}(\mathbb{R}) \ : \ e^{\frac{x^{2}}{4}}f \in L^{2}(\mathbb{R}) \}$  and
$\bar{B}_{1}f(x)=e^{\frac{x^{2}}{4}}f(x)$ for all $f \in \mathcal{D}(\bar{B}_{1})$. Following \cite[pages 407-408]{Mortad} we have $\mathcal{D}(B_{1}^{2})=\{0\}$, hence
\begin{align*}
\mathcal{D}(\bar{B}_{1}^{2})&=\{f \in L^{2}(\mathbb{R}) \ : \ \exists (f_{n}) \subset \mathcal{D}(B_{1}^{2}) \ \mbox{such that  } f_{n} \rightarrow f \mbox{ and } B_{1}^{2}f_{n} \mbox{converges} \}\\
                            &=\{0\}.
\end{align*}
Thus for all integer $p \geq 2$, we have $\mathcal{D}(\bar{B}^{p})=\mathcal{D}(\bar{B}^{p+1})$.  Also, $-1  \in \rho(\bar{B}_{1})$. Indeed, for all $f \in \mathcal{D}(\bar{B}_{1})$, $(- I - \bar{B}_{1}) f(x) = -(1 + e^{\frac{x^{2}}{4}})f(x)$.
\smallskip

\noindent Hence $(1+ e^{\frac{x^{2}}{4}})^{-1}[ ( I + \bar{B}_{1}) f(x) ]=( I + \bar{B}_{1})[(1+ e^{\frac{x^{2}}{4}})^{-1}f(x)] =f(x)$. Remark that if $f \in L^{2}(\mathbb{R})$, we have $\frac{f}{1+e^{\frac{x^{2}}{4}}} \in L^{2}(\mathbb{R})$, thus we consider for all $f \in L^{2}(\mathbb{R})$
$$-(I+\bar{B}_{1})^{-1}f(x)=-(1+ e^{\frac{x^{2}}{4}})^{-1}f(x), \ \mbox{for all } f \in \mathcal{D}(\bar{B}_{1}).$$
\noindent Thus $-(I+\bar{B}_{1}) \in \widehat{\mathcal{C}}(L^{2}(\mathbb{R}))$, and it is self-adjoint and invertible. 
\smallskip

\noindent Now, we define $B_{2}: \ell^{2}(\mathbb{N}) \rightarrow \ell^{2}(\mathbb{N})$, as follows
$$B_{2}(x_{1},x_{2},....)=(x_{1},\frac{x_{2}}{2},\frac{x_{3}}{3},....),$$
$B_{2}$ is bounded, compact and self-adjoint. Hence $B=-(I+\bar{B}_{1}) \oplus B_{2}$ is closed generalized Drazin-Riesz invertible and self-adjoint in $H$, also it belongs to $\widehat{\mathcal{C}}(H)$, as desired. 

 \noindent Now by Proposition 6.1 and Theorem 6.2 of \cite{Schm}, $U= \{ U(t):= e^{itB} \, : \, t\in \mathbb{R} \}$ is a strongly continuous one-parameter unitary group on $H$, and $iB$ is its infinitesimal generator with domain:
$$\mathcal{D}(iB)=\mathcal{D}(B)=\{ x \in H: \frac{d}{dt} |_{t=0} U(t)x=\underset{h \longrightarrow 0^{+}}{\lim} h^{-1}(U(h)-I)x \ \mbox{exists} \}.$$
\end{example}


\end{document}